\documentclass{article}

\author{Montek Singh Gill}
\title{2-Linearizability of Geometric 3-Manifold Groups Over Commutative Rings}

\usepackage{amsmath}
\usepackage{amssymb}
\usepackage{amsthm}

\usepackage[margin=1in]{geometry}
\theoremstyle{theorem}
\newtheorem{Theorem}{Theorem}[section]

\newtheorem{Lemma}[Theorem]{Lemma}
\newtheorem{Proposition}[Theorem]{Proposition}
\newtheorem{Conjecture}[Theorem]{Conjecture}

\theoremstyle{definition}
\newtheorem{Definition}[Theorem]{Definition}
\newtheorem{Remark}[Theorem]{Remark}

\newcommand{\N}{\mathbb{N}}
\newcommand{\R}{\mathbb{R}}
\newcommand{\Z}{\mathbb{Z}}

\newcommand{\C}{\mathbb{C}}
\newcommand{\E}{\mathbb{E}}
\newcommand{\Hyp}{\mathbb{H}}
\newcommand{\Sph}{\mathbb{S}}

\usepackage{listings}
\usepackage{color}

\lstdefinestyle{base}{
  emptylines=1,
  moredelim=**[is][\color{blue}]{@}{@},
  moredelim=**[is][\color{green}]{@@}{@@},
}

\usepackage{tikz}
\usepackage{tikz-cd}
\usetikzlibrary{decorations.markings}
\usetikzlibrary{arrows,shapes,positioning}
\usetikzlibrary{matrix}
\usetikzlibrary{cd}
\usetikzlibrary{arrows.meta}
\usetikzlibrary{calc}

\usepackage{bbm}
\usepackage{mathrsfs}

\usepackage{scalerel}

\usepackage{multirow}

\usepackage{hyperref}

\usepackage{ltablex} 
\keepXColumns

\begin{document}

\pagenumbering{arabic}

\maketitle

\begin{abstract}
The fundamental groups of compact 3-manifolds are known to be residually finite. Feng Luo conjectured that a stronger statement is true, by only allowing finite groups of the form $\mathrm{PGL}(2,R)$, where $R$ is a finite commutative ring. In earlier work, this conjecture was disproven in full generality. The conjecture arose in the context of orientable connected compact 3-manifolds which are geometrizable. By constructing explicit faithful linear representations using rings with nilpotent elements, we demonstrate that the conjecture holds for six of the eight Thurston model geometries, namely all but $\Sph^3$ and $\widetilde{\mathrm{SL}_2}$. In the case of $\Sph^3$, the conjecture holds if we replace $\mathrm{PGL}(2,R)$ with $\mathrm{GL}(2,R)$. A spherical counterexample for the projective variant is the Poincar\'{e} homology sphere $\Sigma(2,3,5)$. In the case of $\widetilde{\mathrm{SL}_2}$, the conjecture fails to hold for both the projective and non-projective variants; a counterxample is provided by the Brieskorn sphere $\Sigma(2,3,7)$.
\end{abstract}

\tableofcontents

\section{Introduction}

In this paper, a $3$-manifold is a smooth $3$-manifold, and a geometric structure on a $3$-manifold, in the sense of Thurston, is a Riemannian metric on $M$ which is locally isometric to one of Thurston's eight three-dimensional geometries: $\E^3, \Sph^3, \Hyp^3, \Sph^2 \times \E, \Hyp^2 \times \E, \mathrm{Nil}, \mathrm{Sol}, \widetilde{\mathrm{SL}_2}$. Given an orientable connected compact $3$-manifold $M$, a hyperbolic structure on $M$ gives rise to a developing map
\[
\mathrm{dev} \colon \widetilde M \to \Hyp^3
\]
which is an isometry from the universal cover of $M$ to three-dimensional hyperbolic space, and also a holonomy representation
\[
\mathrm{hol} \colon \pi_1M \hookrightarrow \mathrm{Isom}^+(\Hyp^3)
\]
which is an embedding from the fundamental group of $M$ to the orientation-preserving isometry group of three-dimensional hyperbolic space. For details, see, e.g., \cite{Thurston}, \cite{Ratcliffe} or \cite{Martelli}. Noting that $\mathrm{Isom}^+(\Hyp^3) \cong \mathrm{PSL}(2,\C)$, this gives us an embedding of the fundamental group into a two-dimensional linear group. In the case that $M$ is triangulated, the decomposition of $M$ into simplices lifts to one of $\widetilde M$, which gives rise to a labelling of the $0$-skeleton of the lifted triangulation by elements of $\partial \Hyp^3 = \C P^1$ and this labelling encodes all of the information necessary to construct the holonomy representation; see $\cite{LuoTillmannYang}$ for details. In \cite{Luo}, Luo generalized these labellings to labellings over $\mathrm{P}^1(R)$, the projective line over an arbitrary commutative ring $R$, and constructed representations into $\mathrm{PGL}(2,R)$ by solving  Thurston's hyperbolic gluing equations over $R$. See \cite{FriedlGillTillmann} for an example illustrating the strength of Luo’s generalization. Thus, for triangulated connected compact $3$-manifolds $M$, Luo is able to construct an ample supply of two-dimensional linear representations, over commutative rings, of $\pi_1M$. Based on this, Luo makes the following conjecture.

\begin{Conjecture}[\cite{Luo}]
If $M$ is a connected compact $3$–manifold and $\gamma \in \pi_1M \setminus \{1\}$, then there exists a finite commutative ring $R$ and a homomorphism $\pi_1M \to \mathrm{PGL}(2,R)$ whose kernel does not contain $\gamma$.
\end{Conjecture}

It is known that the fundamental groups of compact $3$-manifolds are residually finite; were this conjecture true, it would provide a concrete list of finite groups into which one could map without killing a given element. However, in \cite{FriedlGillTillmann}, we showed that the conjecture is false. In particular, we demonstrated that there exists a closed graph $3$-manifold which does not satisfy the conjecture. On the other hand, the conjecture arose in the context of orientable connected compact geometric $3$-manifolds, and so, in this paper we investigate the conjecture for these $3$-manifolds. We show that the conjecture in fact fails even in this case, but is true for six of the eight Thurston model geometries, namely all but $\Sph^3$ and $\widetilde{\mathrm{SL}_2}$. In the case of $\Sph^3$, the situation is recoverable in the sense that the conjecture holds true, in a slightly weaker form (see Proposition \ref{prop:equivalent_characterizations}), if we replace projective general linear groups with special linear groups. On the other hand, in the case of $\widetilde{\mathrm{SL}_2}$, the conjecture does not hold true even in this weaker form, or even with special linear groups replaced by general linear groups. \\

In fact, again by Proposition \ref{prop:equivalent_characterizations}, a finitely generated group satisfies the residual property referred to in the conjecture, with $\mathrm{PGL}$ replaced by $\mathrm{SL}$, if and only if it embeds into $\mathrm{SL}(2,R)$ for some, not necessarily finite, commutative ring $R$. Thus, the main result of the paper is the following.

\begin{Theorem}
If $M$ is an orientable connected compact $3$-manifold which admits a geometric structure modelled on any one of Thurston's eight model geometries, except $\widetilde{\mathrm{SL}_2}$, then $\pi_1M$ embeds into $\mathrm{SL}(2,R)$ for some commutative ring $R$; equivalently, given any $\gamma \in \pi_1M \smallsetminus \{1\}$, there exists a homomorphism $\pi_1M \to \mathrm{SL}(2,R_{\emph{fin}})$, for some finite commutative ring $R_{\emph{fin}}$, whose kernel does not contain $\gamma$. In the case of $\widetilde{\mathrm{SL}_2}$, a counterexample is provided by the Brieskorn sphere $\Sigma(2,3,7)$.
\end{Theorem}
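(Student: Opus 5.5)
The plan is to go geometry by geometry, constructing for each $\pi_1M$ an explicit faithful representation into $\mathrm{SL}(2,R)$ for a suitable commutative ring $R$. The equivalence with the residual statement is then supplied by Proposition \ref{prop:equivalent_characterizations}. Two general closure facts will be used throughout. First, if $G$ embeds in $\mathrm{SL}(2,R_1)$ and in $\mathrm{SL}(2,R_2)$, then $G$ embeds in $\mathrm{SL}(2,R_1\times R_2)$ via the product map. Second, since $R$ may contain nilpotents, for instance $R=A[\epsilon_1,\dots,\epsilon_k]/(\epsilon_i\epsilon_j)$, upper unitriangular matrices $\begin{pmatrix}1&x\\0&1\end{pmatrix}$ with $x$ in the nilpotent ideal give large abelian groups. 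Combined with diagonal matrices, they also give semidirect products.

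For $\Hyp^3$ we use the holonomy $\pi_1M\hookrightarrow\mathrm{PSL}(2,\C)$. Since $M$ is orientable and compact, this lifts to $\mathrm{SL}(2,\C)$, because the obstruction in $H^2(M;\Z/2)$ vanishes for hyperbolic $3$-manifolds (Culler). This gives an embedding into $\mathrm{SL}(2,\C)$. For $\Hyp^2\times\E$, the group $\pi_1M$ is virtually $\pi_1\Sigma\times\Z$, and in the orientable case it is a central extension by $\Z$ of a Fuchsian group $\Gamma$ (possibly twisted by an orientation character). We will embed $\Gamma$ into $\mathrm{SL}(2,\R)$ via a lift of its Fuchsian holonomy. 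We then realize the $\Z$-factor by scalar-free unipotent data over a ring with a nilpotent, for instance $\begin{pmatrix}1&\epsilon\\0&1\end{pmatrix}$, twisted appropriately.

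For the $\E^3$, $\mathrm{Nil}$ and $\mathrm{Sol}$ cases, the groups are (virtually) polycyclic: $\Z^3$ extended by finite holonomy, Heisenberg-type groups, or $\Z^2\rtimes_A\Z$. The plan is to use the finite classification of orientable closed manifolds in these geometries (and of the compact ones with boundary). For each, we write down explicit matrices, for example $\Z^2\rtimes_A\Z$ via $\begin{pmatrix}\lambda&0\\0&\lambda^{-1}\end{pmatrix}$ acting on $\begin{pmatrix}1&v\\0&1\end{pmatrix}$ over $\mathbb{Z}[\lambda^{\pm1}]$, where $\lambda$ is an eigenvalue of $A$. The Heisenberg center requires a second-order nilpotent in the ring. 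For $\Sph^2\times\E$, the groups are $\Z$, $\Z\times\Z/2$ and $\Z/2*\Z/2$, and these are easy. For $\Sph^3$, the groups are finite subgroups of $\mathrm{SO}(4)$, of the form $Q\times\Z/n$ or related quotients with $Q\subset \mathrm{SU}(2)$ binary polyhedral or metacyclic. Here I would embed the $\mathrm{SU}(2)$ factor in $\mathrm{SL}(2,\C)$ and handle the cyclic factor via a product ring. The diagonal-quotient cases need care, because a product ring only gives direct products, and the centre $-I$ must be matched.

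The main obstacle will be the negative statement for $\Sigma(2,3,7)$. Here $\pi_1=\langle x,y,z\mid x^2=y^3=z^7=xyz\rangle$ with central $h=xyz$ of infinite order. Suppose $\rho\colon\pi_1\to\mathrm{SL}(2,R)$ is faithful. Then, reducing modulo maximal ideals and using residual finiteness-style arguments, we get reps into $\mathrm{GL}(2,F)$ for fields $F$, and in some quotient $\rho(h)$ must survive. Since the group is perfect, its image is in $\mathrm{SL}$, and a central element of an irreducible image is scalar, so $\rho(h)=\pm I$ over a field. This forces $h^2$ into the kernel of every field representation with irreducible image, while reducible images factor through the abelianization, which is trivial. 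The difficulty is passing from fields to arbitrary commutative rings with nilpotents. I would try to show that $h^2$ lies in the kernel of every representation into $\mathrm{SL}(2,R)$ with $R$ finite local, by induction on the nilpotency length of the maximal ideal. At each step one analyzes a cocycle in $\mathfrak{sl}_2(k)$ and uses $H^1$-vanishing/perfectness of the triangle group quotient to show that $h^2$ stays trivial. Proposition \ref{prop:equivalent_characterizations} then converts this into non-embeddability for any $R$.
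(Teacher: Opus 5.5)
Your positive half is a classify-and-write-matrices plan that stops exactly where the work lies, and several of its concrete steps fail as stated.

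\emph{Virtual structure is not enough.} Embeddability in $\mathrm{SL}(2,R)$ does not pass to finite extensions. The paper's own $\Sigma(2,3,7)$ is finitely covered by a circle bundle over a surface of genus $\ge 2$, and the paper \emph{does} embed that cover's group. So the finite holonomy and the non-split fibre relations have to be built into the representation.

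\emph{Specific failures.} For $\Hyp^2\times\E$, the Fuchsian holonomy of $\Gamma$ has no lift to $\mathrm{SL}(2,\R)$ when the base has an even-order cone point (an involution of $\mathrm{PSL}(2,\R)$ lifts only to elements of order $4$) or is non-orientable. Also, ``twisted appropriately'' hides the real requirement: you need a representation of all of $\pi_1M$ that is injective on $\langle h\rangle$. That is the $\E$-component of the holonomy, and it is exactly where $e(M)=0$ enters. Your $\mathrm{Sol}$ formula needs $\lambda^{1/2}$, since conjugation by $\mathrm{diag}(\lambda,\lambda^{-1})$ scales the corner entry by $\lambda^2$. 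More seriously, it cannot reach torus semi-bundles. Their extra elements conjugate the monodromy generator to its inverse modulo $\Z^2$, which is impossible in the Borel subgroup your ansatz lives in: that subgroup is abelian modulo its unipotent radical, and $|\lambda|\neq 1$. For $\mathrm{Nil}$ the rotational holonomy is not addressed. For $\Sph^3$ you flag the non-product groups but do not resolve them.

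\emph{The missing idea} is to embed the isometry group itself, using nilpotents so that translations become Lie algebra elements on which the rotational part acts by conjugation. Concretely, the paper uses:
\begin{itemize}
\item $v\mapsto[I+\varepsilon\mathrm{X}(v)]$, with $\mathrm{X}$ identifying $\R^3$ with $\mathfrak{su}(2)$ and $\mathrm{SO}(3)\cong\mathrm{PSU}(2)$, in $\mathrm{PSL}_2(\C[\varepsilon]/(\varepsilon^2))$;
\item $\exp(\varepsilon(xP+yQ)+\varepsilon^2tR)$ over $\C[\varepsilon]/(\varepsilon^3)$, with $\mathrm{O}(2)$ acting through $D_\vartheta$ and $J$;
\item $I+\varepsilon(xE+yF)$ for $\mathrm{Sol}$, where $\varepsilon$ lets both off-diagonal slots commute, so the swap $\sigma$ is conjugation by $S$.
\end{itemize}
Together with Proposition~\ref{prop:residual_K_finiteness_for_various_K} (residually $\mathrm{PSL}_2$-finite implies residually $\mathrm{SL}_2$-finite), this removes every lifting issue outside $\Sph^3$. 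Your Culler argument for $\Hyp^3$ is valid but unnecessary, and $\mathrm{Isom}^+(\Hyp^2\times\E)\subseteq\mathrm{Isom}(\Hyp^2)\times\mathrm{Isom}(\E)\hookrightarrow\mathrm{PGL}(2,\R\times\R)$ settles $\Hyp^2\times\E$. For $\Sph^3$, the Kodani--Watanabe list shows every group embeds in a direct product of cyclic and binary polyhedral groups, e.g.\ $D'_{2^{k+2}p}\hookrightarrow D^*_{4p}\times\Z_{2^{k+2}}$ and $T'_{8\cdot 3^k}\hookrightarrow T^*\times\Z_{3^k}$, so product rings suffice.

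On $\Sigma(2,3,7)$ your skeleton is right: reduce to finite local $R$, look at the residue field $k$, and use that $\pi_1B$ is perfect with $h$ central. But the step you single out as the difficulty is misdiagnosed, for three reasons. First, the projectivized representation factors through the triangle group only once you already know $\rho(h)$ is scalar. Second, vanishing of $H^1$ with adjoint coefficients is not guaranteed over every finite residue field, since the usual dimension count needs $2$, $3$, $7$ invertible. Third, propagating ``$\rho(h^2)=I$'' layer by layer does not close in residue characteristic $2$: there a scalar correction $(1+v)I$ with $v\in\mathfrak m^j$ automatically has determinant $\equiv 1 \bmod \mathfrak m^{j+1}$.

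The invariant to propagate is that $\rho(h)$ is scalar, and the tool is a centralizer argument, done in one stroke in Lemma~\ref{lem:finite_local_ring_centre_of_SL_2}. If $\bar\rho$ is trivial, the image lies in the nilpotent group $I+\mathrm{M}(2,\mathfrak m)$ and, being perfect, is trivial. Otherwise $\bar\rho(\pi_1B)$ is a nontrivial perfect subgroup of $\mathrm{GL}(2,k)$, hence absolutely irreducible, hence spans $\mathrm{M}(2,k)$ by Burnside. By Nakayama, $\rho(\pi_1B)$ then spans $\mathrm{M}(2,R)$, so $\rho(h)=\mu I$. Perfectness gives $\mu^2=\det\rho(h)=1$, so $\rho(h^2)=I$ for every finite $R$. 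Proposition~\ref{prop:equivalent_characterizations} then rules out any embedding of $\pi_1B$ into $\mathrm{SL}(2,R)$.
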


\section{Equivalent Characterizations, Universal Representations and Other Generalities}

In this section, we recall some notation, terminology and some results from our previous work \cite{FriedlGillTillmann} which will be helpful here. In the course of our investigations, the target group of a representation is often any one of the following four: general linear, special linear, projective general linear and projective special linear. One can ask the same question as that in Luo's conjecture with the projective general linear group replaced with any of the other three. To treat all of these cases uniformly, we make the following definition.

\begin{Definition}
\label{def:residual_properties}
Let $K$ denote any of the symbols $\mathrm{GL}_2$, $\mathrm{SL}_2$, $\mathrm{PGL}_2$ and $\mathrm{PSL}_2$. Given a group $G$, say that $G$ is \emph{residually $K$} if, for any $g \in G \smallsetminus \{1\}$, there exists a commutative ring $R$ and a homomorphism $G \to K(R)$ whose kernel does not contain $g$. If it is only known that this property holds at a given fixed element $g_0$, we say that $G$ is \emph{residually $K$ at $g_0$}. If we can take the commutative ring $R$ to be finite, we say that $G$ is \emph{residually $K$-finite}, or $G$ is \emph{residually $K$-finite at $g_0$}, respectively.
\end{Definition}

The following result provides the relations between the residual $K$-finiteness properties as we vary $K$. See \cite[Proposition 2.9]{FriedlGillTillmann} for the proof.

\begin{Proposition}
\label{prop:residual_K_finiteness_for_various_K}
Let $G$ be a finitely generated group. We have the following implications for $G$, which hold both with general residual finiteness and with residual finiteness at a specified element of $G$.
\begin{center}
 \begin{tikzpicture}
 \draw[-implies, double equal sign distance] (0.2,2.1) node[anchor=east] {\textit{residually $\mathrm{SL}_2$--finite\,\,}} -- (3,2.1)  node[anchor=west] 
 {\textit{\,\,residually $\mathrm{GL}_2$--finite}}
 node[anchor=south,midway] {\textit{}};
 \draw[-implies, double equal sign distance] (3,1.9) -- (0.2,1.9) node[anchor=north,midway] { $Z(G)=1$};
 \draw[implies-, double equal sign distance] (-1.25,1.7) -- (-1.25,0.75) node[anchor=north] {\hspace{-8mm} \textit{residually $\mathrm{PSL}_2$--finite\,\,}} node[anchor=west,midway] {\textit{}};
 \draw[-implies, double equal sign distance] (-1.5,1.7) -- (-1.5,0.75) node[anchor=east,midway] {\textit{ $Z(G)$ 2-t.f.}};
 \draw[implies-, double equal sign distance] (4.5,1.7) -- (4.5,0.75) node[anchor = north] 
 {\hspace{13mm} \textit{\hspace{-5mm} \,\,residually $\mathrm{PGL}_2$--finite.}} node[anchor=east,midway] {\textit{}};
 \draw[-implies, double equal sign distance] (4.75,1.7) -- (4.75,0.75) node[anchor=west,midway] {\textit{ $Z(G) = 1$}};
 \draw[-implies,double equal sign distance] (0.4,0.6) -- (3,0.6) node[anchor=south,midway] {\textit{}};
 \draw[-implies,double equal sign distance] (3,0.4) -- (0.4,0.4) node[anchor=north,midway] {\textit{}};
 \end{tikzpicture}
\end{center}
Here, 2-t.f. means 2-torsion-free and $Z(G)$ denotes the centre of $G$.
\qed
\end{Proposition}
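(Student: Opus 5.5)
The plan is to prove each arrow separately, working one element at a time. Throughout, $\gamma \in G\smallsetminus\{1\}$ is fixed and we try to transport a representation that separates $\gamma$ from one target to another. Since the global statements are just the pointwise ones quantified over all $\gamma$, it suffices to prove the pointwise versions. One tool I expect to use repeatedly is compatibility with finite products of rings:
\[
K(R_1\times R_2)\cong K(R_1)\times K(R_2)
\]
for each of the four choices of $K$. For $\mathrm{GL}_2$ and $\mathrm{SL}_2$ this is immediate. For the projective groups it holds because the units and the scalar matrices also split as products. Consequently, if $\rho_1,\dots,\rho_k$ are representations into $K$ over finite rings, their product is again a representation into $K$ over a finite ring, and its kernel is $\bigcap_i \ker\rho_i$. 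So it suffices to separate $\gamma$, or finitely many auxiliary elements built from it, one at a time.

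The trivial arrows come first. $\mathrm{SL}_2\Rightarrow\mathrm{GL}_2$ follows from the inclusion $\mathrm{SL}(2,R)\subset \mathrm{GL}(2,R)$. For $\mathrm{SL}_2\Rightarrow \mathrm{PSL}_2$ under the assumption that $Z(G)$ is $2$-torsion-free, and likewise $\mathrm{GL}_2\Rightarrow\mathrm{PGL}_2$ under $Z(G)=1$, the only problem is that $\rho(\gamma)$ might be a nontrivial scalar. In the $\mathrm{GL}_2$ case with $Z(G)=1$, I choose $h$ with $[\gamma,h]\neq 1$ and separate $[\gamma,h]$ as well; if $\rho(\gamma)$ were a scalar, then $\rho([\gamma,h])=1$ would follow, so the representation separating $[\gamma,h]$ cannot send $\gamma$ to a scalar. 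Taking the product with the first representation, and using the product compatibility above, gives the result. In the $\mathrm{SL}_2$ case, scalars in $\mathrm{SL}(2,R)$ satisfy $\lambda^2=1$. If $\gamma\notin Z(G)$, the same commutator trick applies. If $\gamma\in Z(G)$, then $\gamma^2\neq 1$ by $2$-torsion-freeness, so separating $\gamma^2$ forces $\rho(\gamma)\neq\pm$ a scalar of order dividing $2$.

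The reverse arrows $\mathrm{PSL}_2\Rightarrow\mathrm{SL}_2$ and $\mathrm{PGL}_2\Rightarrow\mathrm{GL}_2$, and the horizontal arrows between $\mathrm{PSL}_2$ and $\mathrm{PGL}_2$, require lifting. For the lifting arrows, I would replace $\mathrm{P}K(R)$ by a linear group via the adjoint (conjugation) action on the module $M_2(R)$ of $2\times 2$ matrices. The complication is that this action is $4$-dimensional, or $3$-dimensional on trace-zero matrices, rather than $2$-dimensional. I therefore expect instead to lift directly: pick generators $g_1,\dots,g_n$ of $G$ and lifts $A_i$ of their images, and adjoin formal square roots of the relevant determinants or scalars. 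That is, set
\[
R' = R[u_1,\dots,u_n]/\bigl(u_i^2\det A_i-1\bigr),
\]
which is finite and free over $R$. The assignment $g_i\mapsto u_iA_i$ defines a homomorphism from the free group into $\mathrm{SL}(2,R')$, and each relator goes to a scalar $c_r$.

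The main obstacle is the step $\mathrm{GL}_2\Rightarrow\mathrm{SL}_2$ under $Z(G)=1$, together with these lifting steps. Each relator $r$ is sent to a scalar $c_r$ rather than to $1$, and killing the $c_r-1$ by passing to a quotient ring risks collapsing the ring or identifying $\rho(\gamma)$ with $1$. I would first try to show that the $c_r$ lie in a finite group of roots of unity in $R'$ and handle them by a further finite extension or a product decomposition. Failing that, I would pass to a central extension of $G$ and then use $Z(G)=1$, again with the commutator trick, to check that $\gamma$ survives modulo scalars, which is all the projective targets need.
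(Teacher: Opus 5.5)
The paper gives no argument for this proposition; it only cites \cite[Proposition 2.9]{FriedlGillTillmann}. So your proposal has to stand on its own. Your treatment of $\mathrm{SL}_2\Rightarrow\mathrm{GL}_2$ and of the \emph{global} centre-conditioned arrows is fine. The genuine gap is where you stop: for $\mathrm{PSL}_2\Rightarrow\mathrm{SL}_2$, $\mathrm{PGL}_2\Rightarrow\mathrm{GL}_2$ and $\mathrm{GL}_2\Rightarrow\mathrm{SL}_2$ (when $Z(G)=1$) you only list strategies.

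No strategy can close this gap. The relator scalars $c_r$ you isolate are a real obstruction, and these arrows are false as drawn. Take $G=\mathrm{PSL}(2,\mathbb{F}_7)$, which is trivially residually $\mathrm{PSL}_2$- and $\mathrm{PGL}_2$-finite. A nontrivial map $G\to\mathrm{GL}(2,R)$ with $R$ finite is injective because $G$ is simple, and lands in $\mathrm{SL}(2,R)$ because $G$ is perfect. Passing to a local factor of $R$, and then to its residue field $k$ (the congruence kernel is a $p$-group), gives $G\le\mathrm{SL}(2,k)$. This is impossible: for $\mathrm{char}\,k\neq 2$ the only involution in $\mathrm{SL}(2,k)$ is the central $-I$, and for $\mathrm{char}\,k=2$ there are no elements of order $4$. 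So $G$ is not residually $\mathrm{GL}_2$-finite at any element.

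The arrow $\mathrm{GL}_2\Rightarrow\mathrm{SL}_2$ fails too. The group $G=\mathbb{F}_5\rtimes\mathbb{F}_5^\times\le\mathrm{GL}(2,\mathbb{F}_5)$ has trivial centre but is not residually $\mathrm{SL}_2$-finite. A faithful map to $\mathrm{SL}(2,R)$ with $R$ finite would be faithful on some local factor, since the normal $\Z/5$ is the unique minimal normal subgroup. Residue characteristic $2$ is excluded by the elements of order $4$. Residue characteristics other than $2,5$ are excluded by the non-central involution $d^2$. In residue characteristic $5$, the same argument forces $\Z/5$ into the congruence kernel and $d^2\equiv -I \pmod{\mathfrak m}$. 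Take $i$ maximal with $\Z/5\subseteq I+\mathrm{M}(2,\mathfrak m^i)$. Conjugation by $d^2$ is then trivial on $\Z/5$ modulo $\mathfrak m^{i+1}$, whereas $d^2$ inverts $\Z/5$, which there means $X\mapsto -X$; this is nontrivial because $2$ is a unit.

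What your square-root construction \emph{does} prove is $\mathrm{PGL}_2\Rightarrow\mathrm{PSL}_2$, even pointwise, and you should have concluded it. For a projective target it is harmless that relators go to scalar matrices of $\mathrm{SL}(2,R')$. Since $R'$ is free over $R$, a non-scalar $w(A_1,\dots,A_n)$ stays non-scalar after multiplication by a monomial in the $u_i$. The arrow $\mathrm{PSL}_2\Rightarrow\mathrm{PGL}_2$ is immediate, because $\mathrm{PSL}(2,R)\to\mathrm{PGL}(2,R)$ is injective.

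Separately, your reduction to pointwise statements does not match the arguments you actually give. The commutator and squaring tricks separate $[\gamma,h]$ and $\gamma^2$, which uses the hypothesis at elements other than $\gamma$. So you have proved only the global versions of $\mathrm{GL}_2\Rightarrow\mathrm{PGL}_2$ and $\mathrm{SL}_2\Rightarrow\mathrm{PSL}_2$. Their pointwise versions, which the statement also claims, are false. Let $G=\mathrm{SL}(2,5)*\Z$, so $Z(G)=1$. Then $G$ is residually $\mathrm{SL}_2$-finite at $g=-I$, via the map $G\to\mathrm{SL}(2,\mathbb{F}_5)$ that is the identity on $\mathrm{SL}(2,5)$ and trivial on $\Z$. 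Yet by Proposition~\ref{prop:perfect_group_counterexample}(ii), applied to the perfect factor $\mathrm{SL}(2,5)$, every map $G\to\mathrm{PGL}(2,R)$ with $R$ finite kills $g$.
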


In the case that $G$ is finitely generated, which of course includes the case of the fundamental groups of compact $3$-manifolds, the following result gives alternative characterizations of the residual $K$-finiteness properties. See \cite[Proposition 2.2, Corollary 2.3, Corollary 2.5]{FriedlGillTillmann} for the proof.

\begin{Proposition}
\label{prop:equivalent_characterizations}
Let $G$ be a finitely generated group, and let $K$ denote any of the symbols $\mathrm{GL}_2$, $\mathrm{SL}_2$, $\mathrm{PGL}_2$, $\mathrm{PSL}_2$. We have the following:
\begin{itemize}
	\item If $G$ is residually $K$ at $g$, then it is residually $K$-finite at $g$.
	\item The following are equivalent:
\begin{itemize}
	\item $G$ is residually $K$-finite.
	\item $G$ is residually $K$.
	\item $G$ admits a faithful representation into $K(R)$ for some, not necessarily finite, commutative ring $R$.
\end{itemize}
\end{itemize}\qed
\end{Proposition}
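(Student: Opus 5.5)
The first bullet carries all the content. I would first reduce to a finitely generated ring, then reduce modulo a finite-index ideal, using that finitely generated commutative rings are residually finite. The equivalences then follow formally by taking products of rings.

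\textbf{Step 1 (reduction to a finitely generated ring).} Let $\varphi\colon G\to K(R)$ with $\varphi(g)\neq 1$, and fix generators $s_1,\dots,s_k$ of $G$.
\begin{itemize}
\item If $K=\mathrm{GL}_2$ or $\mathrm{SL}_2$, let $R_0\subseteq R$ be the subring generated by the entries of the matrices $\varphi(s_i)$, their inverses, and the inverses of their determinants. Then $\varphi$ factors through $K(R_0)$.
\item If $K$ is projective, choose lifts $A_i\in \mathrm{GL}(2,R)$ (respectively $\mathrm{SL}(2,R)$) of $\varphi(s_i)$, and let $R_0$ be generated by the entries of $A_i^{\pm1}$ and by $\det(A_i)^{-1}$. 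Write $g$ as a word in the $s_i$; the same word in the $A_i$ gives a lift $A\in\mathrm{GL}(2,R_0)$ of $\varphi(g)$. Since $\varphi(g)\neq 1$, the matrix $A$ is not a scalar matrix over $R$, hence not one over $R_0$.
\end{itemize}
In every case there is a nonzero element $a\in R_0$ witnessing nontriviality. In the linear cases, $a$ is an entry of $\varphi(g)-I$. In the projective cases, $a$ is an off-diagonal entry of $A$ or the difference of its diagonal entries.

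\textbf{Step 2 (residual finiteness of $R_0$).} This is the main obstacle. I claim that if $R_0$ is a finitely generated commutative ring and $a\neq 0$, then there is an ideal $I$ of finite index with $a\notin I$.
\begin{itemize}
\item Since $a\neq0$, $\mathrm{Ann}(a)$ is proper, so choose a maximal ideal $\mathfrak m\supseteq\mathrm{Ann}(a)$. Then $a/1\neq 0$ in $(R_0)_{\mathfrak m}$.
\item $R_0$ is Noetherian, so Krull's intersection theorem in the local ring $(R_0)_{\mathfrak m}$ gives $n$ with $a\notin \mathfrak m^n(R_0)_{\mathfrak m}$. Hence $a\notin\mathfrak m^n$.
\item By the Nullstellensatz for finitely generated $\mathbb Z$-algebras, $R_0/\mathfrak m$ is a finite field.
\item Each quotient $\mathfrak m^j/\mathfrak m^{j+1}$ is a finite-dimensional vector space over $R_0/\mathfrak m$, so $R_0/\mathfrak m^n$ is finite. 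Take $I=\mathfrak m^n$.
\end{itemize}

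\textbf{Step 3 (reduction modulo $I$).} Compose $\varphi$ with $K(R_0)\to K(R_0/I)$. Since $a\notin I$, the image of $\varphi(g)-I_2$ is nonzero in the linear cases. In the projective cases, the image of $A$ stays non-scalar. So $g$ survives in $K(R_0/I)$, and $R_0/I$ is finite. This proves the first bullet.

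\textbf{The equivalences.}
\begin{itemize}
\item Residually $K$-finite implies residually $K$ trivially.
\item Faithful into some $K(R)$ implies residually $K$ trivially.
\item Residually $K$ implies residually $K$-finite by applying the first bullet at each $g\neq 1$.
\item Residually $K$ implies faithful: for each $g\neq1$ pick $\varphi_g\colon G\to K(R_g)$ not killing $g$, and set $R=\prod_{g\neq1}R_g$. Then $\mathrm{GL}(2,R)=\prod\mathrm{GL}(2,R_g)$ and $\mathrm{SL}(2,R)=\prod\mathrm{SL}(2,R_g)$. A matrix over $R$ is a unit scalar exactly when each component is, so the natural map $\mathrm{PGL}(2,R)\to\prod\mathrm{PGL}(2,R_g)$ is an isomorphism. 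The analogous statement holds for $\mathrm{PSL}_2$, whichever quotient of $\mathrm{SL}_2$ by its scalars is used in the convention of the paper. The product map $\prod_g\varphi_g$ therefore gives a representation $G\to K(R)$ whose kernel avoids every $g\neq1$, i.e.\ a faithful one.
\end{itemize}
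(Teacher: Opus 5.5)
Your proof is correct. The paper gives no argument here and only cites Friedl--Gill--Tillmann, and your route is the standard argument for this result: pass to the finitely generated subring of entries, separate the witness $a$ by a finite quotient $R_0/\mathfrak m^n$, then take a product of rings for faithfulness. Choosing $\mathfrak m\supseteq\mathrm{Ann}(a)$ is what lets the finite quotient handle nilpotent witnesses such as those in the rings $\C[\varepsilon]/(\varepsilon^k)$ used later in the paper.

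The only step you leave implicit is that in the projective cases $\varphi$ actually factors through $K(R_0)$. This holds because a matrix $B\in\mathrm{GL}(2,R_0)$ that equals $\lambda I$ over $R$ has $\lambda\in R_0$ (it is a diagonal entry) and $\lambda^2=\det B\in R_0^\times$, so $K(R_0)\to K(R)$ is injective.

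Also note that your product decomposition for $\mathrm{PSL}_2$ relies on the paper's convention that $\mathrm{PSL}(2,R)$ is $\mathrm{SL}(2,R)$ modulo all scalar matrices $\{\lambda I:\lambda^2=1\}$. If one quotiented only by $\{\pm I\}$, the decomposition would fail.
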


Thus we can check residual $K$-finiteness by looking for faithful representations. The next result provides a universal representation through which all representations factor, and so, in a sense, we need only study this universal representation. See \cite[Proposition 2.6]{FriedlGillTillmann} for the proof and details.

\begin{Proposition}
\label{prop:universal_representation}
Let $G$ be a finitely generated group, and let $K$ denote any of the symbols $\mathrm{GL}_2$, $\mathrm{SL}_2$, $\mathrm{PGL}_2$, $\mathrm{PSL}_2$. Then there exists a commutative ring $S_K$, an ideal $I_K \trianglelefteq S_K$ and a homomorphism $\varphi_K\colon G \rightarrow K(S_K/I_K)$ such that any representation $G \to K(R)$ factors through $\varphi_K$; that is, for each $\rho \colon G \to K(R)$, there exists a map $\psi \colon K(S_K/I_K) \to K(R)$ such that the following diagram commutes:
\begin{center}
  \begin{tikzpicture}
  \draw[->] (0.2,2) node[anchor=east] {$G$} -- (2,2)  node[anchor=west] 
  {$K(S_K/I_K)$}
  node[anchor=south,midway] {$\varphi_K$};
  \draw[->] (0.1,1.7) -- (2,0.75) node[anchor=east,midway,yshift=-3mm,xshift=1mm] {$\rho$};
  \draw[->] (2.7,1.7) -- (2.7,0.75) node[anchor = north] 
  {$K(R).$} node[anchor=west,midway] {$\psi$};
  \end{tikzpicture}
\end{center}
\qed
\end{Proposition}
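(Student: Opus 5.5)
This is the standard universal-representation construction, a "generic matrices" argument. Write $G=\langle g_1,\dots,g_n \mid r_j,\ j\in J\rangle$ with finitely many generators; the relators may be infinitely many. I treat $K=\mathrm{GL}_2$ and $\mathrm{SL}_2$ first and then adapt to the projective cases.

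\textbf{Linear cases.} Take the polynomial ring
\[
S_K=\Z[a_i,b_i,c_i,d_i,e_i : 1\le i\le n],
\]
one set of variables per generator. For $\mathrm{GL}_2$, the variable $e_i$ is there to force invertibility of the determinant. For $\mathrm{SL}_2$, drop $e_i$ and impose $a_id_i-b_ic_i=1$ instead. Let $X_i=\begin{pmatrix} a_i & b_i\\ c_i& d_i\end{pmatrix}$. Its inverse is the adjugate times $e_i$, a matrix with entries in $S_K$. So each relator word $r_j$ evaluates to a matrix $r_j(X)$ over $S_K$.

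Let $I_K$ be generated by two kinds of elements:
\begin{itemize}
\item $(a_id_i-b_ic_i)e_i-1$ in the $\mathrm{GL}_2$ case, or $a_id_i-b_ic_i-1$ in the $\mathrm{SL}_2$ case;
\item all four entries of $r_j(X)-\mathrm{Id}$, for every $j$.
\end{itemize}
Then $g_i\mapsto \bar X_i$ defines a homomorphism $\varphi_K$, because the images of the $X_i$ are invertible and satisfy every relator.

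Now let $\rho\colon G\to K(R)$ be a representation. Define a ring map $S_K\to R$ by sending $a_i,b_i,c_i,d_i$ to the entries of $\rho(g_i)$ and $e_i$ to $\det\rho(g_i)^{-1}$. This map kills $I_K$ because $\rho$ respects the relations. The induced map $S_K/I_K\to R$ then induces $\psi\colon K(S_K/I_K)\to K(R)$, and $\psi\circ\varphi_K=\rho$ can be checked on the generators.

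\textbf{Projective cases.} Here the main obstacle appears: an element of $\mathrm{PGL}(2,R)$ need not lift to a matrix with prescribed entries in a canonical way, since scalars are units of $R$. I handle this by relaxing the relator equations to scalar equations. Introduce further variables $\lambda_j,\mu_j$ and impose
\[
r_j(X)=\lambda_j\,\mathrm{Id},\qquad \lambda_j\mu_j=1.
\]
For $\mathrm{PSL}_2$, impose the determinant-one conditions on the $X_i$ as well.

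Given $\rho\colon G\to \mathrm{PGL}(2,R)$, choose any lifts $A_i\in\mathrm{GL}(2,R)$ of the $\rho(g_i)$. Then $r_j(A)$ is a scalar unit matrix, and I send $\lambda_j$ to that scalar. If lifting is not possible globally, I interpret $\mathrm{PGL}(2,R)$ as defined in the cited work \cite{FriedlGillTillmann}, e.g.\ as $\mathrm{GL}_2(R)$ modulo scalars, so that lifts exist by definition.

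Non-uniqueness of lifts does no harm: the statement asks only for existence of $\psi$, and any choice of lifts yields a valid map. Composing with the quotient by scalars gives the factorization. The only genuine check is the diagram, which again reduces to the generators.
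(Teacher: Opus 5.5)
Your proposal is correct and follows the route the paper has in mind. The paper defers the proof to \cite[Proposition 2.6]{FriedlGillTillmann}, but sketches the same generic-matrix construction: $S_K$ is a polynomial ring over $\Z$ with four entry variables per generator, plus extra variables for inverse determinants or unit multipliers in the projective cases, and $I_K$ is generated by the polynomials encoding the group relations. Your reading of $\mathrm{PGL}(2,R)$ as $\mathrm{GL}(2,R)/R^\times I$ is the convention the paper uses (cf.\ the proof of Lemma~\ref{lem:finite_ring_centre_of_perfect_subgroup_of_PSL_2}), so your lifting step is justified.
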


Roughly, the rings $S_K$ are polynomial rings over $\Z$, with four indeterminates for each generator, one for each required matrix entry (and possibly additional indeterminates as inverses for determinants or unit multipliers in the case of the projective groups), and the ideals $I_K$ are generated by all of the polynomials required to respect the group relations. \\

In the case of finitely generated abelian groups, the residual $K$-finiteness properties are always satisfied. See Proposition \cite[Proposition 3.5]{FriedlGillTillmann}.

\begin{Proposition}
\label{prop:residual_finiteness_for_abelian_groups}
If $G$ is a finitely generated abelian group, then $G$ is residually $K$-finite for $K$ any of $\mathrm{GL}_2$, $\mathrm{SL}_2$, $\mathrm{PGL}_2$, $\mathrm{PSL}_2$.
\qed
\end{Proposition}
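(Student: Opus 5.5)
By Proposition \ref{prop:equivalent_characterizations}, it suffices to produce, for a finitely generated abelian group $G$, a faithful representation into $K(R)$ for some commutative ring $R$. The plan is to handle $K=\mathrm{SL}_2$ directly and then deduce the other cases. Write $G \cong \Z^n \oplus \Z/m_1 \oplus \cdots \oplus \Z/m_k$. I will build a faithful representation of each cyclic summand into $\mathrm{SL}_2$ of a suitable ring. I will then take the product ring $R = \prod_i R_i$, using $\mathrm{SL}(2,\prod R_i) = \prod \mathrm{SL}(2,R_i)$. This diagonal map is faithful because its restriction to each summand is faithful and the summands land in distinct factors.

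\textbf{Cyclic summands.} For an infinite cyclic summand I send the generator to the unipotent matrix $\begin{pmatrix} 1 & 1 \\ 0 & 1\end{pmatrix}$ over $\Z$, whose powers $\begin{pmatrix} 1 & t \\ 0 & 1\end{pmatrix}$ are distinct. For $\Z/m$ I work over $\Z/m$ and send the generator to $\begin{pmatrix} 1 & 1 \\ 0 & 1\end{pmatrix}$, which has order exactly $m$. Both matrices have determinant $1$.

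\textbf{The other symbols.} For $\mathrm{GL}_2$ the same maps work, since $\mathrm{SL}_2 \subseteq \mathrm{GL}_2$. For the projective groups, the centre of $G$ is all of $G$, so the implications in Proposition \ref{prop:residual_K_finiteness_for_various_K} requiring trivial centre do not apply, and I argue directly. The unipotent matrices above meet the scalars only trivially: over $\Z/m$, $\begin{pmatrix} 1 & t \\ 0 & 1\end{pmatrix}$ is scalar only when $t=0$. Over the product ring, an element of $\prod \mathrm{SL}(2,R_i)$ is scalar in $\mathrm{GL}(2,R)$ if and only if each component is scalar. Hence the composite $G \to \mathrm{SL}(2,R) \to \mathrm{PSL}(2,R)$, and likewise the composite into $\mathrm{PGL}(2,R)$, remains injective. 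Here I take $\mathrm{PSL}$ and $\mathrm{PGL}$ to be quotients by scalar matrices (units, respectively square roots of unity), as in \cite{FriedlGillTillmann}.

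\textbf{Main obstacle.} The only delicate point is this projective step: making sure the quotient by scalars kills nothing. The strictly upper unipotent choice handles it, since a scalar matrix has zero off-diagonal entry, so $t$ must vanish in each factor. Applying Proposition \ref{prop:equivalent_characterizations} then converts each of these faithful representations into residual $K$-finiteness.
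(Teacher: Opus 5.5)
Your proof is correct. The paper gives no argument for this statement beyond citing \cite[Proposition 3.5]{FriedlGillTillmann}, so the comparison is with the tools the paper sets up, and these allow a somewhat leaner route.

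First, you do not need to build one faithful representation over the product ring $\prod R_i$ and then check componentwise that its image meets the scalars trivially. You can instead use Proposition~\ref{prop:products_preserve_residual_k_finiteness} to reduce to the cyclic summands. For $\Z/m$, your unipotent representation already lands in $\mathrm{PSL}(2,\Z/m)$, which is over a finite ring. For $\Z$, reducing $\bigl(\begin{smallmatrix} 1 & t \\ 0 & 1 \end{smallmatrix}\bigr)$ modulo any $N > |t|$ separates $t$. So not even Proposition~\ref{prop:equivalent_characterizations} is needed.

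Second, you need not treat the four symbols separately. You are right that the conditional implications in Proposition~\ref{prop:residual_K_finiteness_for_various_K} are unavailable, since $Z(G) = G$. But the unconditional ones, $\mathrm{PSL}_2 \Rightarrow \mathrm{SL}_2 \Rightarrow \mathrm{GL}_2$ and $\mathrm{PSL}_2 \Leftrightarrow \mathrm{PGL}_2$, give everything once projective faithfulness is established. This is exactly how the paper finishes the $\E^3$, $\mathrm{Nil}$ and $\mathrm{Sol}$ cases.

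What your version buys in exchange is an explicit embedding of $G$ into $K(R)$ for a single ring $R$, for all four $K$ at once.

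One small slip: your parenthetical has the roles reversed. $\mathrm{PGL}$ is the quotient by all scalar units, and $\mathrm{PSL}$ is the quotient by the scalars $\lambda I$ with $\lambda^2 = 1$. Since you only use that both kernels consist of scalar matrices, this does not affect the argument.
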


The following result shows that residual $\mathrm{GL}_2$, $\mathrm{SL}_2$, $\mathrm{PGL}_2$, $\mathrm{PSL}_2$-finiteness are preserved by products.

\begin{Proposition}
\label{prop:products_preserve_residual_k_finiteness}
Let $\{G_i\}_{i \in I}$ be a collection of groups, for any index set $I$, and let $\mathrm{K}$ denote any of the symbols $\mathrm{GL}_2$, $\mathrm{SL}_2$, $\mathrm{PGL}_2$, $\mathrm{PSL}_2$. If, for each $i \in I$, $G_i$ is residually $\mathrm{K}$-finite, so is the product $\prod G_i$. 
\end{Proposition}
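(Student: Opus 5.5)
The proof is a direct reduction to a single factor via the coordinate projections, using only Definition~\ref{def:residual_properties}; in particular none of Propositions~\ref{prop:equivalent_characterizations} or \ref{prop:universal_representation} is needed. This matters because the product $\prod G_i$ over an arbitrary index set $I$ need not be finitely generated, so those results would not apply to it.

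Fix $g = (g_i)_{i \in I} \in \prod G_i$ with $g \neq 1$. Since the identity of the product is the tuple of identities, there is some index $j \in I$ with $g_j \neq 1$ in $G_j$. Because $G_j$ is residually $\mathrm{K}$-finite, there is a finite commutative ring $R$ and a homomorphism $\rho_j \colon G_j \to \mathrm{K}(R)$ with $\rho_j(g_j) \neq 1$. Let $\pi_j \colon \prod G_i \to G_j$ be the coordinate projection, which is a group homomorphism, and set
\[
\rho = \rho_j \circ \pi_j \colon \prod G_i \to \mathrm{K}(R).
\]
Then $\rho(g) = \rho_j(g_j) \neq 1$, so $g \notin \ker \rho$. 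Since $g$ was an arbitrary nontrivial element, $\prod G_i$ is residually $\mathrm{K}$-finite.

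There is no real obstacle. The only points to check are that the argument is uniform in the symbol $\mathrm{K}$, since it never uses the structure of $\mathrm{K}(R)$, and that it does not depend on the cardinality of $I$. The same argument also works for the restricted direct product (direct sum), which is a subgroup of $\prod G_i$, since residual properties pass to subgroups by restriction. Likewise, the version ``residually $\mathrm{K}$-finite at $g_0$'' holds whenever the relevant factor $G_j$ is residually $\mathrm{K}$-finite at the coordinate $(g_0)_j \neq 1$.
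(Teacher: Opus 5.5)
Your proof is correct and is the same argument as the paper's: choose a coordinate $j$ with $g_j \neq 1$, take a homomorphism $G_j \to \mathrm{K}(R)$ with $R$ finite that does not kill $g_j$, and precompose it with the projection $\prod G_i \to G_j$. Your side remarks are also correct: the argument never needs finite generation or any property of $\mathrm{K}(R)$, it passes to the restricted direct product, and it gives the pointwise version.
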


\begin{proof}
Given any non-identity element $(g_i)$ of $\prod G_i$, there must be some $i_0 \in I$ for which $g_{i_0} \neq 1$. Given that $G_{i_0}$ is residually $\mathrm{K}$-finite, there exists a homomorphism $G_{i_0} \to \mathrm{K}(R)$ for some finite commutative ring $R$ which does not kill $g_{i_0}$. The composite homomorphism $\prod G_i \to G_{i_0} \to \mathrm{K}(R)$ then does not kill $(g_i)$.
\end{proof}

We wish to note here one more result, one which will be used for counterexamples. First, we need two lemmas.

\begin{Lemma}
\label{lem:finite_local_ring_centre_of_SL_2}
If $R$ is a finite commutative ring, then the centre of any perfect subgroup of $\mathrm{GL}(2,R)$ must lie among the scalar matrices.
\end{Lemma}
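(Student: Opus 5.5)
The plan is to reduce to finite local rings and then use a composition-series argument: we locate the "first layer" at which a central element fails to be scalar. There, commutation becomes a linear condition over the residue field.

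\emph{Reduction to the local case.} A finite commutative ring is Artinian, so $R \cong R_1 \times \cdots \times R_n$ with each $R_i$ a finite local ring. Then $\mathrm{GL}(2,R) \cong \prod_i \mathrm{GL}(2,R_i)$, and a matrix is scalar if and only if each of its components is scalar. Let $P \le \mathrm{GL}(2,R)$ be perfect and $z \in Z(P)$. The projection $P_i$ of $P$ to $\mathrm{GL}(2,R_i)$ is perfect, being a quotient of a perfect group. The component $z_i$ of $z$ is central in $P_i$. So it suffices to treat a finite local ring $R$ with maximal ideal $\mathfrak m$ and residue field $k = R/\mathfrak m$ of characteristic $p$.

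\emph{The key layer.} Suppose $z \in Z(P)$ is not scalar. Since $R$ has finite length, choose a composition series of ideals $R = I_0 \supset I_1 = \mathfrak m \supset \cdots \supset I_\ell = 0$, each successive quotient isomorphic to $k$. Let $j$ be maximal such that $z$ is scalar modulo $I_j$; such a $j$ exists because $z$ is scalar modulo $I_0 = R$, and $j<\ell$ since $z$ is not scalar. Pass to $\bar R = R/I_{j+1}$, with $I = I_j/I_{j+1} \cong k$ and $\mathfrak m I = 0$. Here $z = cI_2 + E$ with $c \in \bar R$ and $E \in M_2(I)$ non-scalar. For $g \in P$, the relation $gz = zg$ gives $gE = Eg$. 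Because $\mathfrak m$ annihilates $I$, the product $gE$ depends only on $\bar g$, the reduction of $g$ mod $\mathfrak m$. Identifying $I$ with $k$, this says $\bar g$ commutes with a non-scalar matrix $\bar E \in M_2(k)$.

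\emph{Conclusion.} A non-scalar $2\times 2$ matrix over a field is cyclic, so its centralizer in $M_2(k)$ is $k[\bar E]$, which is commutative. Hence the image of $P$ in $\mathrm{GL}(2,k)$ is abelian. Being also perfect, this image is trivial. Therefore $P$ lies in the congruence kernel $\ker(\mathrm{GL}(2,R)\to \mathrm{GL}(2,k)) = I_2 + M_2(\mathfrak m)$, which has order $|\mathfrak m|^4$, a power of $p$. So $P$ is a finite $p$-group, hence nilpotent. A perfect nilpotent group is trivial, so $P = 1$ and $z = 1$ is scalar, a contradiction.

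The main obstacle is the middle step: checking carefully that commutation in $\bar R$ genuinely reduces to commutation with $\bar E$ over $k$. This uses $\mathfrak m I = 0$ and the fact that the scalar part $cI_2$ commutes with everything. Choosing the layer via a composition series is what makes the argument uniform. It handles both the case where $z$ is already non-scalar modulo $\mathfrak m$ (there $j=0$ and $I_{j+1} = \mathfrak m$) and the case where $z$ is scalar modulo $\mathfrak m$ but not over $R$.
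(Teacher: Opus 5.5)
Your proof is correct. It shares the paper's outer frame: the reduction to a finite local ring, and the fact that the congruence kernel $I+\mathrm{M}(2,\mathfrak m)$ contains no non-trivial perfect subgroup. The core step, however, is genuinely different.

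The paper splits on whether the reduction $\bar H$ is trivial. When it is not, the paper argues that $\bar H$ is absolutely irreducible and uses Burnside's theorem to get that $\bar H$ spans $\mathrm{M}(2,k)$. It then lifts this via Nakayama's lemma to conclude that $H$ spans $\mathrm{M}(2,R)$, so anything commuting with $H$ is scalar.

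You push the problem \emph{down} to $k$ instead. The first layer $I_j/I_{j+1}$ at which $z$ stops being scalar yields a non-scalar $\bar E\in\mathrm{M}(2,k)$ centralizing $\bar P$. Since centralizers of non-scalar $2\times 2$ matrices are commutative, $\bar P$ is abelian, hence trivial. Your handling of that layer is sound: $\mathfrak m I_j\subseteq I_{j+1}$ because $I_j/I_{j+1}$ is a simple module, and $E$ is non-scalar by maximality of $j$.

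What your route buys: it avoids Burnside, the passage to $\bar k$, and Nakayama. Your $p$-group count for the congruence kernel is also quicker than the paper's commutator filtration. It actually proves that any subgroup of $\mathrm{GL}(2,R)$ whose image in $\mathrm{GL}(2,k)$ is merely non-abelian has only scalars in its centralizer in $\mathrm{M}(2,R)$. That hypothesis is weaker than absolute irreducibility.

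What the paper's route buys: the stronger conclusion that $H$ spans $\mathrm{M}(2,R)$. Its mechanism (absolutely irreducible reduction implies spanning) works in any dimension. There the only two-dimensional input is that non-trivial perfect subgroups of $\mathrm{GL}(2,k)$ are absolutely irreducible, whereas your argument uses dimension $2$ through the commutativity of centralizers.
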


\begin{proof}
First, we reduce to the case where $R$ is a finite local ring. Because $R$ is a finite commutative ring, it uniquely decomposes as a direct product of finite local rings: $R \cong \prod_{i=1}^n R_i$. This induces a canonical isomorphism $\mathrm{M}(2, R) \cong \prod_{i=1}^n \mathrm{M}(2, R_i)$, which restricts to $\mathrm{GL}(2, R) \cong \prod_{i=1}^n \mathrm{GL}(2, R_i)$. Let $\Gamma$ be a perfect subgroup of $\mathrm{GL}(2, R)$. Let also $\Gamma_i$ be the projection of $\Gamma$ into $\mathrm{GL}(2, R_i)$. Since $\Gamma$ is perfect, each $\Gamma_i$ is a perfect subgroup of $\mathrm{GL}(2, R_i)$. Moreover, if $\gamma \in \mathrm{Z}(\Gamma)$, then, for each $i$, the projection $\gamma_i$ lies in $\mathrm{Z}(\Gamma_i)$. It follows that it suffices to demonstrate the result in the case where $R$ is local. \\

Let $H \le \mathrm{GL}(2, R)$ be a perfect subgroup. Let $\mathfrak{m}$ denote the maximal ideal of $R$, and $k$ the finite residue field $R/\mathfrak{m}$. Consider the reduction map $\rho \colon \mathrm{GL}(2, R) \to \mathrm{GL}(2, k)$. The image $\bar{H} = \rho(H)$ is a perfect subgroup of $\mathrm{GL}(2, k)$. Suppose that $\bar{H}$ is trivial. Then $H$ is contained entirely within the congruence subgroup $C = I + \mathrm{M}(2, \mathfrak{m})$. Because $\mathfrak{m}$ is the maximal ideal of a finite local ring, it is nilpotent. We claim that then $C$ is a nilpotent group. To see this, let $N > 0$ be such that $\mathfrak{m}^N = 0$ and consider the terminating descending chain
\begin{equation}
\label{eqn:finite_local_ring_lemma_nilpotent_filtration}
C_1 \supseteq C_2 \supseteq \cdots \supseteq C_N = \{I\}
\end{equation}
where $C_i = I + \mathrm{M}(2,\mathfrak{m}^i)$. Note that $C_1 = C$. If $g \in C_1$ and $h \in C_k$, a simple check shows that, modulo $\mathrm{M}(2,\mathfrak{m}^{k+1})$, $gh$ and $hg$ coincide. It then follows that $[g,h] \in C_{k+1}$. Thus $[C_1,C_k] \subseteq C_{k+1}$. Another simple check, via induction, then shows that the lower central series of $C$ is a subfiltration of that in (\ref{eqn:finite_local_ring_lemma_nilpotent_filtration}), and so that the lower central series also terminates, as desired. Now, as $C$ is nilpotent, it is solvable. As $H \subseteq C$, $H$ is then a perfect subgroup of a solvable group, and so is trivial. The desired conclusion then is trivially satisfied. \\

Now suppose that $\bar{H}$ is non-trivial. Because $\bar{H}$ is a non-trivial perfect subgroup of $\mathrm{GL}(2, k)$, its natural representation on $k^2$ must be absolutely irreducible. (If it were reducible over the algebraic closure $\bar{k}$, $\bar{H}$ would be conjugate to a group of upper triangular matrices, which is solvable, contradicting that $\bar{H}$ is perfect). Because $\bar{H}$ acts absolutely irreducibly, the absolutely irreducible version of Burnside's theorem implies that the $k$-linear span of $\bar{H}$ is the entire matrix algebra $\mathrm{M}(2, k)$. Now, let $A$ be the $R$-subalgebra of $\mathrm{M}(2, R)$ generated by $H$. The fact that $\bar{H}$ spans $\mathrm{M}(2 ,k)$ implies that $A + \mathfrak{m}\mathrm{M}(2, R) = \mathrm{M}(2, R)$. Since $\mathrm{M}(2, R)$ is a finitely generated module over the local ring $R$, Nakayama's Lemma dictates that $A = \mathrm{M}(2, R)$. Because the linear span of $H$ is all of $\mathrm{M}(2, R)$, any element in $\mathrm{Z}(H)$ must commute with the entire matrix ring $\mathrm{M}(2, R)$. The centre of $\mathrm{M}(2, R)$ is exactly the set of scalar matrices, and so the desired conclusion follows.
\end{proof}

\begin{Lemma}
\label{lem:finite_ring_centre_of_perfect_subgroup_of_PSL_2}
If $R$ is a finite commutative ring, then the centre of any perfect subgroup of $\mathrm{PGL}(2, R)$ must be trivial.
\end{Lemma}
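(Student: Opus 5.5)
The plan is to lift the situation from $\mathrm{PGL}(2,R)$ to $\mathrm{GL}(2,R)$ and apply Lemma~\ref{lem:finite_local_ring_centre_of_SL_2}. The only extra work is a commutator argument showing that a lift of a central element of the projective group is already central in a suitable perfect lift. I will take $\mathrm{PGL}(2,R)=\mathrm{GL}(2,R)/Z$, where $Z=\{\lambda I : \lambda\in R^\times\}$ is the group of scalar matrices. This subgroup is central in $\mathrm{GL}(2,R)$.

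Let $P\le \mathrm{PGL}(2,R)$ be perfect, let $\pi\colon \mathrm{GL}(2,R)\to \mathrm{PGL}(2,R)$ be the quotient map, and put $\widetilde H=\pi^{-1}(P)$. Since $R$ is finite, $\widetilde H$ is a finite group, so its derived series $\widetilde H\supseteq \widetilde H^{(1)}\supseteq\cdots$ stabilises at some term $H=\widetilde H^{(n)}$. This $H$ is perfect. Moreover $\pi(\widetilde H^{(k)})=P^{(k)}=P$ for every $k$, because $P$ is perfect, and hence $\pi(H)=P$. So $H$ is a perfect subgroup of $\mathrm{GL}(2,R)$ mapping onto $P$. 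This step of producing a perfect lift is the one point needing care, since $\widetilde H$ itself need not be perfect; finiteness is what guarantees that the derived series stabilises.

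Now let $z\in Z(P)$ and choose $h\in H$ with $\pi(h)=z$. For each $g\in H$, $\pi([h,g])=[z,\pi(g)]=1$, so $[h,g]=\lambda(g)I$ for some $\lambda(g)\in R^\times$. Using the identity $[h,gg']=[h,g]\,g[h,g']g^{-1}$ and the centrality of scalar matrices, we get $\lambda(gg')=\lambda(g)\lambda(g')$. Thus $\lambda\colon H\to R^\times$ is a homomorphism into an abelian group. Since $H$ is perfect, $\lambda$ is trivial, so $h$ commutes with every element of $H$, i.e.\ $h\in Z(H)$.

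By Lemma~\ref{lem:finite_local_ring_centre_of_SL_2}, applied to the perfect subgroup $H$ of $\mathrm{GL}(2,R)$, the element $h$ is a scalar matrix. Therefore $z=\pi(h)=1$, and $Z(P)$ is trivial. No reduction to local rings is needed here, because that reduction is already built into Lemma~\ref{lem:finite_local_ring_centre_of_SL_2}.
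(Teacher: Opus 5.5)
Your proof is correct and follows the paper's argument: lift $P$ to a perfect subgroup of $\mathrm{GL}(2,R)$ that surjects onto it, note that a lift of a central element has scalar commutators giving a homomorphism to $R^\times$, kill that homomorphism by perfectness, and invoke Lemma~\ref{lem:finite_local_ring_centre_of_SL_2}. The only difference is how you get the perfect lift. You let the derived series of the finite group $\pi^{-1}(P)$ stabilise, whereas the paper shows directly that $\Gamma=[\pi^{-1}(P),\pi^{-1}(P)]$ is already perfect: each element of $\pi^{-1}(P)$ is $\gamma u$ with $\gamma\in\Gamma$ and $u$ scalar, so every commutator lies in $[\Gamma,\Gamma]$. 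That argument needs no finiteness, so your remark that finiteness is what makes this step work holds for your route but is not essential.
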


\begin{proof}
Let $H$ be a perfect subgroup of $\mathrm{PGL}(2, R)$. Let $\pi \colon \mathrm{GL}(2, R) \to \mathrm{PGL}(2, R)$ be the canonical projection and let $\mathrm{U}(2,R)$ denote the kernel of $\pi$, that is, the group of scalar matrices $R^\times I$. Set $\tilde{H} = \pi^{-1}(H)$ and $\Gamma = [\tilde{H}, \tilde{H}]$. We claim that $\Gamma$ is a perfect subgroup of $\mathrm{SL}(2,R)$. To see this, first note that, as $\Gamma$ is generated by commutators, all constituent matrices have unit determinant. Next, note that:
\[
\pi(\Gamma) = \pi([\tilde{H}, \tilde{H}]) = [\pi(\tilde{H}), \pi(\tilde{H})] = [H, H] = H.
\]
Now, given $x, y \in \tilde{H}$, we need to show that $[x,y] \in [\Gamma, \Gamma]$. As $\pi(x) \in H$, and $\pi(\Gamma) = H$, we have that $\pi(x) = \pi(\gamma)$ for some $\gamma \in \Gamma$, and so $x = \gamma u$ for some $\gamma \in \Gamma$ and $u \in \mathrm{U}(2,R)$. Similarly, $y = \gamma' u'$ for some $\gamma' \in \Gamma$ and $u' \in \mathrm{U}(2,R)$. We then have that
\[
[x,y] = (\gamma u)(\gamma' u')(\gamma u)^{-1}(\gamma' u')^{-1}.
\]
As $u$ and $u'$ are scalar, and so central in $\mathrm{GL}(2,R)$, they and their inverses can be eliminated, and so we are left with
\[
[x,y] = [\gamma,\gamma'] \in [\Gamma, \Gamma]
\]
as desired. We have now demonstrated that $\Gamma$ is a perfect subgroup of $\mathrm{SL}(2, R)$. \\

Fix $w \in \mathrm{Z}(H)$. Our goal is to show that $w = 1$. Because $\pi(\Gamma) = H$, there exists some $\gamma \in \Gamma$ such that $\pi(\gamma) = w$. Fix such a preimage, say $\gamma_0$. We claim that $\gamma_0 \in \mathrm{Z}(\Gamma)$. To see this, first, note that, for any $\gamma \in \Gamma$, because $w$ is central in $H$, we have:
\[
\pi([\gamma_0, \gamma]) = [\pi(\gamma_0), \pi(\gamma)] = [w, \pi(\gamma)] = 1.
\]
This implies that $[\gamma_0, \gamma] \in \mathrm{U}(2,R)$. In other words, for every $\gamma \in \Gamma$, the commutator $[\gamma_0, \gamma]$ is a central scalar matrix in $\mathrm{GL}(2, R)$. We thus find that $\gamma \mapsto [\gamma_0, \gamma]$ defines a map $c_{\gamma_0} \colon \Gamma \to \mathrm{U}(2,R)$. This is in fact a homomorphism. To see this, note that, for any $\gamma, \delta \in \Gamma$, we have
\[
c_{\gamma_0}(\gamma\delta) = \gamma_0\gamma\delta\gamma_0^{-1}\delta^{-1}\gamma^{-1} = (\gamma_0\gamma\gamma_0^{-1}\gamma^{-1})\gamma(\gamma_0\delta\gamma_0^{-1}\delta^{-1})\gamma^{-1} = [\gamma_0,\gamma]\gamma[\gamma_0, \delta]\gamma^{-1}.
\]
Because $[\gamma_0,\gamma], [\gamma_0, \delta]$ are scalar, they commute with $\gamma$ and $\gamma^{-1}$, allowing us to write
\[
c_{\gamma_0}(\gamma\delta) = [\gamma_0, \gamma][\gamma_0, \delta] = c_{\gamma_0}(\gamma)c_{\gamma_0}(\delta).
\]
As $\Gamma$ is perfect, and $\mathrm{U}(2,R)$ abelian, we must have that $c_{\gamma_0}$ is trivial. Thus, $[\gamma_0, \gamma] = 1$ for all $\gamma \in \Gamma$, and so we find that $\gamma_0 \in \mathrm{Z}(\Gamma)$, as desired. By Lemma \ref{lem:finite_local_ring_centre_of_SL_2}, we have that $\gamma_0$ is scalar, and so $w = \pi(\gamma) = 1$, as desired.
\end{proof}

We can now state the result which will be used later in relation to counterexamples.

\begin{Proposition}
\label{prop:perfect_group_counterexample}
Let $R$ be a finite commutative ring, $G$ a perfect group and $z \in \mathrm{Z}(G)$. We have the following:
\begin{itemize}
	\item[(i)] For any homomorphism $\varphi \colon G \to \mathrm{GL}(2, R)$, we have $\varphi(z) = \mu I$ with $\mu^2 = 1$; in particular, $\varphi(z^2) = 1$.
	\item[(ii)] For any homomorphism $\varphi \colon G \to \mathrm{PGL}(2, R)$, we have $\varphi(z) = 1$.
\end{itemize}
\end{Proposition}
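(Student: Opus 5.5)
The plan is to apply the two preceding lemmas to the image of $G$, which is perfect because homomorphic images of perfect groups are perfect.

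\textbf{Part (i).} Let $\varphi \colon G \to \mathrm{GL}(2,R)$ be a homomorphism and set $H = \varphi(G)$.
\begin{itemize}
\item Since $G = [G,G]$, we get $H = [H,H]$, so $H$ is a perfect subgroup of $\mathrm{GL}(2,R)$.
\item Since $z$ is central in $G$, the element $\varphi(z)$ commutes with every element of $H = \varphi(G)$, so $\varphi(z) \in \mathrm{Z}(H)$.
\item By Lemma \ref{lem:finite_local_ring_centre_of_SL_2}, $\varphi(z) = \mu I$ for some scalar $\mu$, and $\mu \in R^\times$ because $\varphi(z)$ is invertible.
\item Next I show $\mu^2 = 1$. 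Since $G$ is perfect, $\det \circ \varphi \colon G \to R^\times$ is a homomorphism to an abelian group and is therefore trivial. Hence $H \subseteq \mathrm{SL}(2,R)$, and so $1 = \det(\mu I) = \mu^2$.
\item Consequently $\varphi(z^2) = \mu^2 I = I$.
\end{itemize}

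\textbf{Part (ii).} Let $\varphi \colon G \to \mathrm{PGL}(2,R)$ be a homomorphism.
\begin{itemize}
\item As before, $\varphi(G)$ is a perfect subgroup of $\mathrm{PGL}(2,R)$.
\item Also as before, $\varphi(z)$ is central in $\varphi(G)$.
\item Lemma \ref{lem:finite_ring_centre_of_perfect_subgroup_of_PSL_2} says the centre of a perfect subgroup of $\mathrm{PGL}(2,R)$ is trivial, so $\varphi(z) = 1$.
\end{itemize}

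There is no real obstacle: the substantive work is already done in the two lemmas. The only point needing care is the determinant argument in (i), which is what upgrades ``scalar'' to ``scalar with $\mu^2 = 1$''. It uses only that the abelianization of $G$ is trivial, so any homomorphism from $G$ to the abelian group $R^\times$ is trivial.
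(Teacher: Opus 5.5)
Your proposal is correct and follows the paper's proof essentially verbatim. In both parts you pass to the perfect image $\varphi(G)$, observe that $\varphi(z)$ is central in it, and invoke Lemma~\ref{lem:finite_local_ring_centre_of_SL_2} (respectively Lemma~\ref{lem:finite_ring_centre_of_perfect_subgroup_of_PSL_2}); in (i) the triviality of $\det\circ\varphi$ then upgrades ``scalar'' to $\mu I$ with $\mu^2=1$.
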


\begin{proof}
(i): Let $H = \varphi(G) \subseteq \mathrm{GL}(2, R)$. Since $G$ is perfect, $H$ is a perfect subgroup of $\mathrm{GL}(2, R)$.  Because $z \in \mathrm{Z}(G)$, we have that $\varphi(z) \in \mathrm{Z}(H)$. It then follows by Lemma \ref{lem:finite_local_ring_centre_of_SL_2} that $\varphi(z)$ is a scalar. As $G$ is perfect, $\det \circ \, \varphi$ must be trivial, and so $\varphi(z)$ must have unit determinant. The result follows. \\

(ii): Let $H = \varphi(G) \subseteq \mathrm{PGL}(2, R)$. Since $G$ is perfect, $H$ is a perfect subgroup of $\mathrm{PGL}(2, R)$. Because $z \in \mathrm{Z}(G)$, we have that $\varphi(z) \in \mathrm{Z}(H)$. It then follows by Lemma \ref{lem:finite_ring_centre_of_perfect_subgroup_of_PSL_2} that $\varphi(z) = 1$.
\end{proof}

\section{Hyperbolic Geometry $\Hyp^3$ and the Trivial Product Geometries $\Hyp^2 \times \E$, $\Sph^2 \times \E$}

In our previous work \cite[Proposition 3.7]{FriedlGillTillmann}, we claimed to have demonstrated the desired property for the case of the geometries $\Hyp^3$, $\Hyp^2 \times \E$ and $\Sph^2 \times \E$. The proofs in the case of the product geometries carried a slight flaw: $\mathrm{Isom}^+(\Hyp^2 \times \E)$ is not exactly $\mathrm{Isom}^+(\Hyp^2) \times \mathrm{Isom}^+(\E)$ and $\mathrm{Isom}^+(\Sph^2 \times \E)$ is not exactly $\mathrm{Isom}^+(\Sph^2) \times \mathrm{Isom}^+(\E)$, as isometries which are orientation-reversing on both factors yield orientation-preserving isometries on the products. We provide a corrected argument here.

\begin{Theorem}
\label{thm:hyperbolic_and_trivial_product_geometries}
If $M$ is an orientable connected compact 3-manifold and admits a geometric structure modelled on $\Hyp^3$, $\Hyp^2 \times \E$ or $\Sph^2 \times \E$, then $\pi_1(M)$ is residually $\mathrm{GL}_2, \mathrm{SL}_2, \mathrm{PGL}_2, \mathrm{PSL}_2$-finite.
\end{Theorem}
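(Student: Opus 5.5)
Since $\pi_1 M$ is finitely generated, Proposition~\ref{prop:equivalent_characterizations} reduces the theorem to producing a faithful representation into $K(R)$ for some commutative ring $R$. Proposition~\ref{prop:residual_K_finiteness_for_various_K} then lets us move between the four choices of $K$. The plan is to obtain residual $\mathrm{SL}_2$-finiteness in every case. This gives $\mathrm{GL}_2$ at once, and gives $\mathrm{PSL}_2$ and $\mathrm{PGL}_2$ whenever the centre is $2$-torsion-free or trivial. We argue separately for each geometry.

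\emph{Hyperbolic case.} Here $\pi_1 M$ embeds in $\mathrm{PSL}(2,\C)$ via the holonomy, so it is residually $\mathrm{PSL}_2$. Since $M$ is compact and hyperbolic, $\pi_1M$ is torsion-free and centreless: a nontrivial centre would force $\pi_1 M$ to be elementary, i.e.\ virtually abelian. By Proposition~\ref{prop:residual_K_finiteness_for_various_K} we pass from $\mathrm{PSL}_2$ to $\mathrm{PGL}_2$, and since the centre is trivial we then reach $\mathrm{SL}_2$ and $\mathrm{GL}_2$. Alternatively, one can lift to $\mathrm{SL}(2,\C)$ directly (Culler), which gives an embedding because $\pi_1 M$ is torsion-free.

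\emph{$\Sph^2\times\E$ case.} The closed orientable examples are $S^2\times S^1$, with $\pi_1=\Z$, and $\R P^3\#\R P^3$, with $\pi_1=\Z/2*\Z/2\cong \Z\rtimes\Z/2$, the infinite dihedral group $D_\infty$. The first is covered by Proposition~\ref{prop:residual_finiteness_for_abelian_groups}. For $D_\infty$ we use explicit matrices. Over $R=\Z[t,t^{-1}]$, send the generator of $\Z$ to $\mathrm{diag}(t,t^{-1})$ and the involution to $\begin{pmatrix}0&1\\-1&0\end{pmatrix}$, which inverts it by conjugation; this is faithful in $\mathrm{SL}_2$. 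Its image in $\mathrm{PSL}_2$ (and hence $\mathrm{PGL}_2$) is also faithful: the reflections $\begin{pmatrix}0&t^k\\-t^{-k}&0\end{pmatrix}$ and the translations $\mathrm{diag}(t^k,t^{-k})$ are never $\pm I$ for $k\ne 0$. Since $D_\infty$ has trivial centre, this handles all four $K$ in any case.

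\emph{$\Hyp^2\times\E$ case.} The main obstacle is that the isometry group is not a product, which is exactly the flaw being corrected. Here $M$ is Seifert fibred with Euler number $0$, so a finite-index subgroup is $\pi_1(\Sigma)\times\Z$. The whole group sits in $\mathrm{Isom}^+(\Hyp^2\times\E)$, which contains $\mathrm{Isom}(\Hyp^2)\times\mathrm{Isom}(\E)$ with the orientation-reversing pairs included. Write $\pi_1M\le\mathrm{Isom}^+(\Hyp^2\times \E)\cong(\mathrm{Isom}(\Hyp^2)\times\mathrm{Isom}(\E))^+$. I would embed $\mathrm{Isom}(\Hyp^2)\cong \mathrm{PGL}(2,\R)$ and the relevant part of $\mathrm{Isom}(\E)$, namely the discrete subgroup generated by translations and at most one reflection (a $\Z$ or $D_\infty$), into $\mathrm{PGL}(2,\cdot)$ or $\mathrm{SL}(2,\cdot)$ over a commutative ring. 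Then $\pi_1M$ embeds in the product, which by the argument of Proposition~\ref{prop:products_preserve_residual_k_finiteness} is residually $K$ whenever each factor is.

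Concretely, the image of $\pi_1M$ in $\mathrm{Isom}(\Hyp^2)$ is a Fuchsian group, possibly containing reflections. It is residually $\mathrm{PGL}_2$ via $\mathrm{PGL}(2,\R)$, and since it has trivial centre Proposition~\ref{prop:residual_K_finiteness_for_various_K} upgrades this to $\mathrm{SL}_2$. The image in $\mathrm{Isom}(\E)$ is $\Z$ or $D_\infty$, both handled above. Hence $\pi_1M$, as a subgroup of the product, is residually $\mathrm{SL}_2$-finite and residually $\mathrm{PGL}_2$-finite. The remaining care point is the centre of $\pi_1M$ when passing to the projective variants. The centre lies in the fibre $\Z$, or is trivial, so it is $2$-torsion-free. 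This permits the $\mathrm{PSL}_2$ conclusion via Proposition~\ref{prop:residual_K_finiteness_for_various_K}, and the $\mathrm{PGL}_2$ conclusion follows from $\mathrm{PSL}_2$.
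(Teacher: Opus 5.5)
Two steps fail as written. Both can be repaired.

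\emph{The $D_\infty$ representation.} Your assignment is not a homomorphism into $\mathrm{SL}_2$. Since $\begin{pmatrix}0&1\\-1&0\end{pmatrix}^2=-I$, the relation $s^2=1$ fails, and only the projectivised map into $\mathrm{PSL}_2$ is well defined; your faithfulness check there is fine. No other choice over $\Z[t^{\pm1}]$ can work. If $A\in\mathrm{SL}(2,R)$ and $A^2=I$, Cayley--Hamilton gives $(\mathrm{tr}A)A=2I$ and $(\mathrm{tr}A)^2=4$. So $A=\tfrac12(\mathrm{tr}A)I$ is scalar whenever $2$ is invertible. Hence a group with non-central involutions has no faithful $\mathrm{SL}_2$-representation over a domain of characteristic $\neq 2$. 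Work in characteristic $2$ instead. Over $\mathbb{F}_2[t^{\pm1}]$, the assignment $\tau\mapsto\mathrm{diag}(t,t^{-1})$, $s\mapsto\begin{pmatrix}0&1\\1&0\end{pmatrix}$ has determinant $1$. It is faithful into all four of $\mathrm{GL}_2,\mathrm{SL}_2,\mathrm{PGL}_2,\mathrm{PSL}_2$ over that ring, so it settles $D_\infty$ without Proposition~\ref{prop:residual_K_finiteness_for_various_K}. Alternatively, keep only the $\mathrm{PSL}_2$ statement and invoke that proposition, as the paper does.

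\emph{The $\mathrm{Isom}(\E)$ factor.} The image of $\pi_1M$ in $\mathrm{Isom}(\E)$ need not be $\Z$ or $D_\infty$, and need not even be discrete. Take a closed surface $\Sigma$ and $\phi\colon\pi_1\Sigma\to\R$ sending one standard generator to $\sqrt2$ and the others to $0$. Then $(\gamma,n)\cdot(z,s)=(\gamma z,\,s+\phi(\gamma)+n)$ is a free, properly discontinuous, cocompact, orientation-preserving isometric action of $\pi_1\Sigma\times\Z$ on $\Hyp^2\times\E$, with quotient $\Sigma\times S^1$. Its projection to $\mathrm{Isom}(\E)$ is the dense subgroup $\Z+\sqrt2\,\Z$. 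In general the image is $\Z^r$ or $\Z^r\rtimes\Z/2$. The simplest repair is the paper's: embed all of $\mathrm{Isom}(\E)$ into $\mathrm{GL}(2,\R)$ via $x\mapsto\pm x+c \leftrightarrow \begin{pmatrix}\pm1&c\\0&1\end{pmatrix}$, which stays faithful in $\mathrm{PGL}(2,\R)$. The characteristic-$2$ trick over $\mathbb{F}_2[t_1^{\pm1},\dots,t_r^{\pm1}]$ also works.

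With these repairs your argument goes through, and it differs from the paper's in the following ways.

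\textbf{What the paper does.} It classifies nothing. It embeds all of $\mathrm{Isom}^+(X\times\E)\subseteq\mathrm{Isom}(X)\times\mathrm{Isom}(\E)$ into $\mathrm{PGL}$ over a product of fields. For $\Hyp^2$ it uses $\mathrm{Isom}(\Hyp^2)\cong\mathrm{PGL}(2,\R)$; for $\Sph^2$ it uses $\mathrm{O}(3)\cong\mathrm{SO}(3)\times\Z/2$ and $\mathrm{SO}(3)\cong\mathrm{PSU}(2)$. It then applies Proposition~\ref{prop:residual_K_finiteness_for_various_K} once, to $\pi_1M$.

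\textbf{Your $\Hyp^2\times\E$ argument.} This is the same product idea, since $\mathrm{PGL}(2,R_1\times R_2)\cong\mathrm{PGL}(2,R_1)\times\mathrm{PGL}(2,R_2)$. But aiming at $\mathrm{SL}_2$ factor by factor forces you to upgrade the Fuchsian image $\Gamma_1$ from $\mathrm{PGL}_2$ to $\mathrm{SL}_2$. That detour is avoidable and delicate. $\Gamma_1$ can contain elliptic involutions, for example when the base orbifold is $S^2(2,2,2,2,2,2)$ with Euler number $0$. By the computation above, these must become scalar wherever $2$ is invertible. The paper only applies Proposition~\ref{prop:residual_K_finiteness_for_various_K} to the torsion-free group $\pi_1M$.

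\textbf{Where your route gains.} Your $\Sph^2\times\E$ case, via the classification into $\Z$ and $D_\infty$, is more explicit than the paper's uniform argument. In the hyperbolic case, Culler's lift gives $\mathrm{SL}_2$ directly. The lift is automatically faithful, because it covers a faithful representation; the absence of $2$-torsion is what guarantees the lift exists, not what makes it faithful.
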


\begin{proof}
In the case of $\Hyp^3$, this follows, using holonomy representations, from $\mathrm{Isom}^+(\Hyp^3) \cong \mathrm{PSL}(2,\C)$ and Propositions \ref{prop:equivalent_characterizations} and \ref{prop:residual_K_finiteness_for_various_K}. For $\Hyp^2 \times \E$, first, note that $\mathrm{Isom}^+(\Hyp^2 \times \E) \subseteq \mathrm{Isom}(\Hyp^2) \times \mathrm{Isom}(\E)$. We have that $\mathrm{Isom}(\Hyp^2) \cong \mathrm{PGL}(2,\R)$. Moreover, by encoding affine maps as upper triangular matrices, we have that $\mathrm{Isom}(\E)$ embeds into $\mathrm{PGL}(2,\R)$. Thus $\mathrm{Isom}^+(\Hyp^2 \times \E) \hookrightarrow \mathrm{PGL}(2,\R) \times \mathrm{PGL}(2,\R) \cong \mathrm{PGL}(2, \R \times \R)$. The result now follows via holonomy representations and Propositions \ref{prop:equivalent_characterizations} and \ref{prop:residual_K_finiteness_for_various_K}. For $\Sph^2 \times \E$, first, note that $\mathrm{Isom}^+(\Sph^2 \times \E) \subseteq \mathrm{Isom}(\Sph^2) \times \mathrm{Isom}(\E)$. We have that $\mathrm{O}(3, \R) \cong \mathrm{SO}(3, \R) \times \Z_2$, where $\Z_2$ is the cyclic group of order 2, via $A \mapsto ((\det A)A, \det A)$. Now, we have that $\mathrm{SO}(3,\R) \cong \mathrm{PSU}(2,\C) \subseteq \mathrm{PGL}(2, \C)$, and $\Z_2$ of course embeds into $\mathrm{PGL}(2, \C)$. As before, $\mathrm{Isom}(\E)$ embeds into $\mathrm{PGL}(2,\R)$. Thus $\mathrm{Isom}^+(\Sph^2 \times \E) \hookrightarrow \mathrm{PGL}(2,\C) \times \mathrm{PGL}(2,\C) \times \mathrm{PGL}(2, \R) \cong \mathrm{PGL}(2, \C \times \C \times \R)$. The result now follows via holonomy representations and Propositions \ref{prop:equivalent_characterizations} and \ref{prop:residual_K_finiteness_for_various_K}.
\end{proof}

\section{Spherical Geometry $\Sph^3$}

In the case of spherical $3$-manifolds, we have the following.

\begin{Theorem}
\label{thm:spherical_manifolds}
If $M$ is an orientable connected compact 3-manifold and admits a geometric structure modelled on $\Sph^3$, then $\pi_1(M)$ is residually $\mathrm{GL}_2, \mathrm{SL}_2$-finite.
\end{Theorem}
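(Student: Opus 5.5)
Since $M$ is spherical, $\pi_1(M)$ is a finite subgroup $G$ of $\mathrm{SO}(4)$ acting freely on $\Sph^3$. By Proposition \ref{prop:equivalent_characterizations} it suffices to embed $G$ faithfully into $\mathrm{SL}(2,R)$ for some commutative ring $R$. Faithfulness into $\mathrm{GL}(2,R)$ then follows for free, because $\mathrm{SL}(2,R) \subseteq \mathrm{GL}(2,R)$.

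The plan uses the classical description of $\mathrm{SO}(4)$ via unit quaternions. We have $\mathrm{SO}(4) \cong (\mathrm{SU}(2) \times \mathrm{SU}(2))/\{\pm(1,1)\}$, where $(q_1,q_2)$ acts on $\mathbb{H} \cong \R^4$ by $x \mapsto q_1 x q_2^{-1}$. Let $\tilde G \le \mathrm{SU}(2)\times\mathrm{SU}(2)$ be the preimage of $G$, and let $p_1, p_2$ be the two projections to $\mathrm{SU}(2) \subseteq \mathrm{SL}(2,\C)$. The map
\[
\tilde G \to \mathrm{SL}(2,\C)\times\mathrm{SL}(2,\C) \cong \mathrm{SL}(2,\C\times\C)
\]
is injective, but it is a representation of $\tilde G$, not of $G$. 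The difficulty is the element $(-1,-1)$, which maps to $(-I,-I) = -I$ in $\mathrm{SL}(2,\C\times\C)$, so it does not descend to $G$.

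I would resolve this with the classification of finite fixed-point-free subgroups of $\mathrm{SO}(4)$ (Hopf, Seifert–Threlfall, Milnor). In every case $G$ can be conjugated so that it is of one of two kinds:
\begin{itemize}
\item[(a)] $G$ is the image of a subgroup of $\mathrm{SU}(2)\times\mathrm{SU}(2)$ that meets $\{\pm(1,1)\}$ trivially. Then $G$ lifts isomorphically and the map above is already faithful.
\item[(b)] $G$ is a direct product $G = H \times C_n$ with $\gcd(|H|, n) = 1$, where $H$ is a binary polyhedral group in $\mathrm{SU}(2)$, or one of the known index-two diagonal variants ($D'_{2^k(2n+1)}$, $T'_{8\cdot 3^k}$).
\end{itemize}

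For (b) I would argue directly. Binary polyhedral groups lie in $\mathrm{SU}(2)\subseteq \mathrm{SL}(2,\C)$. Cyclic groups embed in $\mathrm{SL}(2,\C)$ via $\mathrm{diag}(\zeta,\zeta^{-1})$. Products are then handled via $\mathrm{SL}(2,\C\times\C)$, using the product construction behind Proposition \ref{prop:products_preserve_residual_k_finiteness}, which applies equally to faithful representations. The variants $D'_{2^k(2n+1)}$ and $T'_{8\cdot3^k}$ are subgroups of $\mathrm{SU}(2)\times C_m$ for suitable $m$, hence also embed. Note that every finite subgroup of $\mathrm{SU}(2)$, and in particular every cyclic group, embeds in $\mathrm{SL}(2,\C)$, so each factor is handled.

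The main obstacle is making this case analysis airtight. If the classification is awkward to cite, the fallback is Milnor's structural form: every such $G$ is $\Gamma \times C_m$ with $\gcd(|\Gamma|, m)=1$, where $\Gamma$ is cyclic, binary polyhedral, or one of $D'$, $T'$. Then one only needs faithful $\mathrm{SL}_2$-representations of $D'_{2^k(2n+1)}$ and $T'_{8\cdot 3^k}$, which can be written explicitly.

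For $D'_{2^k(2n+1)} = \langle x,y \mid x^{2^k}=1,\ y^{2n+1}=1,\ xyx^{-1}=y^{-1}\rangle$, I would take
\[
y\mapsto \mathrm{diag}(\omega,\omega^{-1}), \qquad x \mapsto \begin{pmatrix}0&\xi\\ -\xi^{-1}&0\end{pmatrix}\cdot(\text{scalar}),
\]
with the scalar chosen to fix the order of $x$. The scalar spoils determinant one, so I would instead pair this with a second $\mathrm{SL}_2$ factor carrying the cyclic quotient $\langle x\rangle$ faithfully, via $\mathrm{diag}(\zeta_{2^k},\zeta_{2^k}^{-1})$. The combined map into $\mathrm{SL}(2,\C\times\C)$ is then injective on $G$.

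$T'_{8\cdot3^k}$ is handled the same way: use $\mathrm{SL}(2,\C)$ for the quotient $Q_8$ part and a diagonal $\mathrm{SL}_2$ factor for the cyclic $3^k$ quotient. This works because any element killed by both maps lies in $Q_8 \cap \ker(\text{cyclic part})$, where the first map is faithful.
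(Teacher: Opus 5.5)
Your proposal is correct and is essentially the paper's argument. Both use the classification of spherical space-form groups to reduce to cyclic groups, binary polyhedral groups in $\mathrm{SU}(2)$, finite products, and the two families $D'$ and $T'$, and your paired representations are exactly the paper's embeddings $D'_{2^{k+2}p}\hookrightarrow D^*_{4p}\times\Z_{2^{k+2}}$ and $T'_{8\cdot 3^k}\hookrightarrow T^*\times\Z_{3^k}$.

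Two slips need fixing. First, $Q_8$ is a normal subgroup of $T'_{8\cdot 3^k}$, not a quotient, so your first factor should be $T'_{8\cdot 3^k}\to T'_{8\cdot 3^k}/\langle z^3\rangle\cong T^*\subset\mathrm{SU}(2)$, which is faithful on $Q_8$. Second, in the $D'$ case no scalar is needed: your antidiagonal matrix already has order $4$, which divides $2^k$ since $k\ge 2$ for these groups.
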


\begin{proof}
As per \cite[Theorem 1.2]{KodaniWatanabe}, $\pi_1M$ is isomorphic to one of the following:
\begin{itemize}
	\item[(1)] A cyclic group $\Z_n$.
	\item[(2)] A direct product $\Z_m \times P$ where $P$ is a binary polyhedral group (binary dihedral, tetrahedral, octahedral, or icosahedral).
	\item[(3)] A direct product $\Z_m \times D'_{2^{k+2}p} = \langle x, y \, \mid \, x^{2^{k+2}} = 1, y^p = 1, xy^{-1} = yx \rangle$, $m > 0$, $k \ge 0$, $p \ge 3$ odd, and $(m, 2p) = 1$.
	\item[(4)] A direct product $\Z_m \times T'_{8 \cdot 3^k} = \langle x, y, z \, \mid \, x^2 = (xy)^2 = y^2, zxz^{-1} = y, zyz^{-1} = xy, z^{3^k} = 1 \rangle$, $m > 0$, $(m, 6) = 1$, and $k \ge 1$.
\end{itemize}
By Proposition \ref{prop:residual_finiteness_for_abelian_groups}, each cyclic group is residually $\mathrm{GL}_2, \mathrm{SL}_2$-finite. Moreover, as in \cite[Chapter 7]{Wolf}, every binary polyhedral group is a subgroup of $\mathrm{SU}(2)$, and so admits a faithful representation into $\mathrm{SL}(2, \C)$. The result, for cases (1) and (2), now follows by Propositions \ref{prop:equivalent_characterizations} and \ref{prop:products_preserve_residual_k_finiteness}. For case (3), as in \cite[Section 4.3]{KodaniWatanabe}, let
\[
D^*_{4p} = \langle r, s \, \mid \, r^{2p} = 1, s^2 = r^p, srs^{-1} = r^{-1} \rangle
\]
be the binary dihedral group of order $4p$. One can check that
\[
x \mapsto (s, 1) \quad y \mapsto (r^2, 0)
\]
defines an embedding $D'_{2^{k+2}p} \hookrightarrow D^*_{4p} \times \Z_{2^{k+2}}$. For case (4), as in \cite[Section 4.5]{KodaniWatanabe}, let
\[
T^* = \langle r,s,t \, \mid \, r^2 = (rs)^2 = s^2, trt^{-1} = s, tst^{-1} = rs, t^3 = 1\rangle
\]
be the binary tetrahedral group of order 24. One can check that
\[
x \mapsto (r,0) \quad y \mapsto (s,0) \quad z \mapsto (t,1)
\]
defines an embedding $T'_{8 \cdot 3^k} \hookrightarrow T^* \times \Z_{3^k}$. The result, for cases (3) and (4), now follows by Propositions \ref{prop:equivalent_characterizations} and \ref{prop:products_preserve_residual_k_finiteness}.
\end{proof}

The above result does not demonstrate residual $\mathrm{PSL}_2$, $\mathrm{PGL}_2$-finiteness for spherical 3-manifold groups. In fact, these properties do not hold for such groups, as we now demonstrate.

\begin{Theorem}
\label{thm:spherical_counterexample}
Let $P$ denote the Poincar\'{e} homology sphere $\Sigma(2,3,5)$, which is an orientable connected compact 3-manifold, and which admits a geometric structure modelled on $\Sph^3$. The fundamental group $\pi_1P$ is not residually $\mathrm{K}$-finite for any of $\mathrm{K} = \mathrm{PGL}_2, \mathrm{PSL}_2$.
\end{Theorem}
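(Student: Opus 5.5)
The plan is to apply Proposition \ref{prop:perfect_group_counterexample}(ii) to $G = \pi_1 P$ with $z$ a nontrivial central element. Recall that $\pi_1 P$ is the binary icosahedral group $I^* \cong \mathrm{SL}(2,\mathbb{F}_5)$. It has order $120$ and is perfect, since it is the universal central extension of the simple group $A_5$. Its centre is $\{\pm 1\}$, of order $2$, generated by $z = -I$. So the first step is to record these standard facts: perfectness, and that $Z(\pi_1 P) = \langle z \rangle$ with $z \neq 1$. I would cite the description of $\pi_1\Sigma(2,3,5)$ as $\langle a,b,c \mid a^2=b^3=c^5=abc\rangle$, or the list in \cite{KodaniWatanabe} and \cite{Wolf} already used in Theorem \ref{thm:spherical_manifolds}.

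For $\mathrm{K} = \mathrm{PGL}_2$, the second step is as follows. By Proposition \ref{prop:perfect_group_counterexample}(ii), every homomorphism $\varphi\colon \pi_1P \to \mathrm{PGL}(2,R)$ with $R$ a finite commutative ring satisfies $\varphi(z) = 1$. Hence no such homomorphism detects the nontrivial element $z$, so $\pi_1 P$ is not residually $\mathrm{PGL}_2$-finite at $z$, and in particular not residually $\mathrm{PGL}_2$-finite.

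For $\mathrm{K} = \mathrm{PSL}_2$, I would reduce to the previous case. The natural map $\mathrm{SL}(2,R) \to \mathrm{GL}(2,R) \to \mathrm{PGL}(2,R)$ kills the scalars of $\mathrm{SL}(2,R)$. Since $\mathrm{PSL}(2,R)$ is the quotient of $\mathrm{SL}(2,R)$ by those scalars, this map induces a homomorphism $\iota\colon \mathrm{PSL}(2,R) \to \mathrm{PGL}(2,R)$.

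The key point, and the one I expect to be the main obstacle, is that $\iota$ need not be injective in general. Its kernel consists of classes of matrices in $\mathrm{SL}(2,R)$ that are scalar in $\mathrm{GL}(2,R)$, and these are exactly the scalars already quotiented out. So $\iota$ is in fact injective, provided $\mathrm{PSL}(2,R)$ is defined as $\mathrm{SL}(2,R)$ modulo its scalar subgroup $\{\lambda I : \lambda^2=1\}$. I would check that this matches the convention of \cite{FriedlGillTillmann}.

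Given injectivity, any $\varphi\colon \pi_1P\to\mathrm{PSL}(2,R)$ composes to $\iota\circ\varphi$, which kills $z$ by the second step, and injectivity of $\iota$ gives $\varphi(z)=1$. Alternatively, Proposition \ref{prop:residual_K_finiteness_for_various_K} (residually $\mathrm{PSL}_2$-finite implies residually $\mathrm{PGL}_2$-finite, pointwise at $z$) yields the conclusion directly, avoiding the injectivity check altogether. I would use that route as the primary argument.
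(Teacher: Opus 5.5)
Your proposal is correct and follows the paper's proof. The paper identifies $\pi_1P \cong \mathrm{SL}(2,5)$ as a perfect group with nontrivial central element $-I$, applies Proposition~\ref{prop:perfect_group_counterexample}(ii) to show it is not residually $\mathrm{PGL}_2$-finite at $-I$, and transfers this to $\mathrm{PSL}_2$ via Proposition~\ref{prop:residual_K_finiteness_for_various_K}. Your side remark on $\mathrm{PSL}(2,R)\to\mathrm{PGL}(2,R)$ is confusingly phrased: you say it "need not be injective" and then show it is. It is injective under the stated convention, but, as you note, it is not needed.
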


\begin{proof}
It is known that $\pi_1P \cong \mathrm{SL}(2,5)$, which is a perfect group. The element $-I$ is central. By Proposition \ref{prop:perfect_group_counterexample}, $\mathrm{SL}(2,5)$ cannot be residually $\mathrm{PGL}_2$-finite at $-I$. By Proposition \ref{prop:residual_K_finiteness_for_various_K}, it then also cannot be residually $\mathrm{PSL}_2$-finite at $-I$.
\end{proof}

\begin{Remark}
If $M$ is an orientable connected compact 3-manifold which admits a geometric structure modelled on $\Sph^3$, the holonomy representation arising from the geometric structure gives an embedding $\mathrm{hol} \colon \pi_1M \hookrightarrow \mathrm{Isom}^+(\Sph^3) \cong \text{SO}(4,\R)$. Moreover, as in \cite[Chapter 2]{Stillwell} and \cite[Theorem 9.4.1]{Artin}, we have that 
\[
\mathrm{SO}(4,\R)/\{\pm I\} \cong \mathrm{SO}(3,\R) \times \mathrm{SO}(3,\R) \cong \mathrm{PSU}(2,\C) \times \mathrm{PSU}(2,\C).
\]
Combining the above with Propositions \ref{prop:residual_K_finiteness_for_various_K} and \ref{prop:equivalent_characterizations}, we have residual $\mathrm{GL}_2$, $\mathrm{SL}_2$, $\mathrm{PGL}_2$, $\mathrm{PSL}_2$-finiteness for $\pi_1M$ at all elements except possibly at $\mathrm{hol}^{-1}(-I)$. It is thus not surprising to see $-I$ arise in the counterexample provided in Theorem \ref{thm:spherical_counterexample}.
\end{Remark}

\section{Euclidean Geometry $\mathbb{E}^3$}

Let $M$ be an orientable connected compact 3-manifold with a geometric structure modelled on $\E^3$. Let also $K$ be any of the symbols $\mathrm{GL}_2$, $\mathrm{SL}_2$, $\mathrm{PGL}_2$, $\mathrm{PSL}_2$. From the geometric structure, we have a faithful holonomy representation $\mathrm{hol} \colon \pi_1M \hookrightarrow \mathrm{Isom}^+(\E^3)$. It is standard that $\mathrm{Isom}^+(\E^3) \cong \R^3 \rtimes \mathrm{SO}(3, \R)$ where multiplication in the latter is given by
\[
(v, R) \cdot (w, S) = (v + Rw, RS).
\]
We shall construct a linear embedding of $\mathrm{Isom}^+(\E^3)$ by first constructing a 2-dimensional embedding of $\R^3$. \\

Let $\varepsilon$ be an indeterminate, and consider the ring $\C[\varepsilon]/(\varepsilon^2)$. For $v = (x,y,z) \in \R^3$, set
\[
\mathrm{X}(v) =
\left(
\begin{array}{cc}
ix & y+iz \\
-y+iz & -ix
\end{array}
\right).
\]
Note that, for $v, w \in \R^3$, we have
\begin{equation}
\label{eqn:euc_proof_X_additive}
\mathrm{X}(v+w) = \mathrm{X}(v) + \mathrm{X}(w).
\end{equation}
In fact, $\mathrm{X}$ identifies $\R^3$ with the Lie algebra $\mathfrak{su}(2)$, where the standard basis vectors $e_1$, $e_2$ and $e_3$ map to $i\sigma_3$, $i\sigma_2$, $i\sigma_1$, where $\sigma_1$, $\sigma_2$ and $\sigma_3$ are the Pauli matrices, namely
\[
\sigma_1 = 
\left(
\begin{array}{cc}
0 & 1 \\
1 & 0
\end{array}
\right)
,\quad
\sigma_2 = 
\left(
\begin{array}{cc}
0 & -i \\
i & 0
\end{array}
\right)
,\quad
\sigma_3 = 
\left(
\begin{array}{cc}
1 & 0 \\
0 & -1
\end{array}
\right).
\]
Now define
\[
\mathrm{T}(v) := I + \varepsilon\mathrm{X}(v).
\]
Here, the matrix entries lie in $\C[\varepsilon]/(\varepsilon^2)$. Note that, since $\mathrm{X}(v)$ is traceless, $\det(I + \varepsilon\mathrm{X}(v)) = 1 + \varepsilon\,\mathrm{tr}\,\mathrm{X}(v) + \varepsilon^2\det\mathrm{X}(v) = 1$. We claim that $\mathrm{T}$ is a group embedding of $\R^3$ in $\mathrm{SL}(2, \C[\varepsilon]/(\varepsilon^2))$. To see this, note that, using (\ref{eqn:euc_proof_X_additive}) and $\varepsilon^2 = 0$, we have
\begin{align*}
(I + \varepsilon\mathrm{X}(v)) \cdot (I + \varepsilon\mathrm{X}(w)) &= I + \varepsilon(\mathrm{X}(v) + \mathrm{X}(w)) \\
&= I + \varepsilon\mathrm{X}(v+w).
\end{align*}
Moreover, a straightforward entry-by-entry check shows that $\mathrm{T}$ is faithful. \\

In fact, if we projectivize, that is to say postcompose with the projection $\mathrm{SL}_2(\C[\varepsilon]/(\varepsilon^2)) \to \mathrm{PSL}_2(\C[\varepsilon]/(\varepsilon^2))$ to form a representation
\[
\widehat{\mathrm{T}} \colon \R^3 \longrightarrow \mathrm{PSL}_2(\C[\varepsilon]/(\varepsilon^2))
\]
then it too is faithful. To see this, note that if $\mathrm{T}(x,y,z)$ is a scalar, then from the off-diagonal entries, we have $y = z = 0$, and equality of the diagonal entries enforces $2ix\varepsilon = 0$ which forces $x = 0$. \\

We have now constructed a 2-dimensional projective linear embedding of $\R^3$. Our goal is to construct such an embedding for $\mathrm{Isom}^+(\E^3) \cong \R^3 \rtimes \mathrm{SO}(3, \R)$. To do so, we need to address the $\mathrm{SO}(3, \R)$ factor. It is well-known that there is a double cover
\[
\Gamma \colon \mathrm{SU}(2) \to \mathrm{SO}(3)
\]
which satisfies the following: for $U \in \mathrm{SU}(2)$ and $v \in \R^3$, we have
\begin{equation}
\label{eqn:euc_proof_u_conj}
U\mathrm{X}(v)U^{-1} = \mathrm{X}(\Gamma(U)v).
\end{equation}
For each $A \in \mathrm{SO}(3)$, note that $\Gamma^{-1}(A)$ is well-defined up to a sign. If, for each $A$, we also embed the matrix entries of $\Gamma^{-1}(A)$ in $\C[\varepsilon]/(\varepsilon^2)$, we get a map
\[
\Delta \colon \mathrm{SO}(3) \to \mathrm{PSL}_2(\C[\varepsilon]/(\varepsilon^2)).
\]
A straightforward check shows that $\Delta$ is a homomorphism. Now define a representation
\begin{equation}
\label{eqn:xi_rep_of_isom_euc}
\Xi \colon \R^3 \rtimes \mathrm{SO}(3) \longrightarrow \mathrm{PSL}_2(\C[\varepsilon]/(\varepsilon^2))
\end{equation}
by setting
\[
\Xi(v,A) := \widehat{\mathrm{T}}(v)\Delta(A).
\]
By the universal property of semidirect products, we know that this is a well-defined group map so long as, for any $v \in \R^3$ and $A \in \mathrm{SO}(3)$, we have that
\[
\Delta(A)\widehat{\mathrm{T}}(v)\Delta(A)^{-1} = \widehat{\mathrm{T}}(Av).
\]
This holds as follows. Let $U$ denote a preimage under $\Gamma$ of $A$. Using (\ref{eqn:euc_proof_u_conj}), we then have
\begin{align*}
\Delta(A)\widehat{\mathrm{T}}(v)\Delta(A)^{-1} &= [U] \cdot [I+\varepsilon\mathrm{X}(v)] \cdot [U^{-1}] \\
&= [I + \varepsilon U\mathrm{X}(v)U^{-1}] \\
&= [I + \varepsilon X(\Gamma(U)v)] \\
&= [I + \varepsilon X(Av)] \\
&= \widehat{\mathrm{T}}(Av)
\end{align*}
as desired. \\

Finally, we must check that $\Xi$ is faithful. Suppose that $\Xi(v, A) = 1$. If we reduce the matrices modulo $\varepsilon$, $\widehat{\mathrm{T}}(v)$ maps to $1$, whereas $\Delta(A)$ remains unchanged. We thus have $A = \Gamma(I) = I$. We are then left with $\widehat{\mathrm{T}}(v) = 1$, and so, because $\widehat{\mathrm{T}}$ is faithful, we have $v = 0$. This demonstrates that $\Xi$ is faithful.

\begin{Theorem}
\label{theorem:nil_proof}
If $M$ is an orientable connected compact $3$-manifold which admits a geometric structure modelled on $\E^3$, then $\pi_1M$ is residually $\mathrm{GL}_2$, $\mathrm{SL}_2$, $\mathrm{PGL}_2$, $\mathrm{PSL}_2$-finite.
\end{Theorem}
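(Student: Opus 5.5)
The plan is to compose the holonomy representation with the faithful representation $\Xi$ built just above. The geometric structure gives an embedding $\mathrm{hol}\colon \pi_1M \hookrightarrow \mathrm{Isom}^+(\E^3) \cong \R^3 \rtimes \mathrm{SO}(3,\R)$. Postcomposing with $\Xi$ from (\ref{eqn:xi_rep_of_isom_euc}) gives a homomorphism
\[
\Xi \circ \mathrm{hol} \colon \pi_1M \longrightarrow \mathrm{PSL}_2(\C[\varepsilon]/(\varepsilon^2)).
\]
It is faithful, since it is a composite of two injective homomorphisms. So $\pi_1M$ admits a faithful representation into $\mathrm{PSL}(2,R)$ with $R = \C[\varepsilon]/(\varepsilon^2)$ a commutative ring. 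Because $\pi_1M$ is finitely generated ($M$ is compact), Proposition \ref{prop:equivalent_characterizations} shows that $\pi_1M$ is residually $\mathrm{PSL}_2$-finite.

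Next I pass to the other three symbols using Proposition \ref{prop:residual_K_finiteness_for_various_K}. The unconditional arrows give that residually $\mathrm{PSL}_2$-finite implies residually $\mathrm{PGL}_2$-finite. They also give that residually $\mathrm{PSL}_2$-finite implies residually $\mathrm{SL}_2$-finite, which in turn implies residually $\mathrm{GL}_2$-finite. This follows the implication diagram as drawn, where the upward arrows on both sides and the horizontal arrows carry no hypothesis.

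There is a backup route to $\mathrm{SL}_2$ that avoids relying on that reading of the diagram. The map $\Delta$ was only well defined projectively, because of the sign ambiguity in $\Gamma^{-1}$. Instead, I would use the lift $\widetilde{\Xi}(v,U) = \mathrm{T}(v)U$ on $\R^3 \rtimes \mathrm{SU}(2)$, which is a genuine $\mathrm{SL}_2$ representation by (\ref{eqn:euc_proof_u_conj}). However, $\pi_1M$ need not lift to $\R^3\rtimes\mathrm{SU}(2)$, so this route is weaker, and I would rely on Proposition \ref{prop:residual_K_finiteness_for_various_K}.

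The only points needing care are confirming that compactness gives finite generation of $\pi_1M$, and that the arrows of Proposition \ref{prop:residual_K_finiteness_for_various_K} out of the $\mathrm{PSL}_2$ corner are indeed unconditional. Everything substantive, namely the construction and faithfulness of $\Xi$, has already been done above.
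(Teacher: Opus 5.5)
Your proposal is correct and is essentially the paper's own proof: compose $\mathrm{hol}$ with the faithful representation $\Xi$ into $\mathrm{PSL}_2(\C[\varepsilon]/(\varepsilon^2))$, apply Proposition \ref{prop:equivalent_characterizations} (using finite generation of $\pi_1M$) to get residual $\mathrm{PSL}_2$-finiteness, and then obtain the other three properties from the unconditional arrows of Proposition \ref{prop:residual_K_finiteness_for_various_K}, which you read correctly. Your side remark that $\pi_1M$ need not lift to $\R^3 \rtimes \mathrm{SU}(2)$ is mistaken: lifts of the rotational part to $\mathrm{SU}(2) \cong \mathrm{Spin}(3)$ correspond to spin structures on $M$, and orientable $3$-manifolds always have one, so the backup route would also give $\mathrm{SL}_2$ directly; since you do not rely on that route, the proof is unaffected.
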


\begin{proof}
Given any such manifold $M$, $\pi_1M$ embeds into $\mathrm{Isom}^+(\E^3)$. As in (\ref{eqn:xi_rep_of_isom_euc}) above, this isometry group embeds into $\mathrm{PSL}_2(\C[\varepsilon]/(\varepsilon^2))$, and so, by Proposition~\ref{prop:equivalent_characterizations}, we have that $\pi_1M$ is residually $\mathrm{PSL}_2$-finite. The remaining residual finiteness properties now follow from Proposition~\ref{prop:residual_K_finiteness_for_various_K}.
\end{proof}

\section{The Geometry $\mathrm{Nil}$}

The fundamental groups of orientable connected compact $3$-manifolds which admit a geometric structure modelled on $\mathrm{Nil}$ are subgroups of the isometry group $\mathrm{Isom}(\mathrm{Nil})$. Here, $\mathrm{Nil}$ is the $3$-dimensional Lie group of upper unitriangular matrices:
\[
\mathrm{Nil} =
\left\{
\left.
\left(
\begin{array}{ccc}
1 & x & z \\
0 & 1 & y \\
0 & 0 & 1
\end{array}
\right)
\,
\right\vert
\,
x, y, z \in \R
\right\}.
\]
As in \cite[p. 468]{Scott}, writing $(x,y,z)$ for the corresponding matrix above, the product is given by
\[
(x,y,z)  \cdot (x', y', z') = (x + x', y + y', z + z' + xy').
\]
For rotational symmetry, it is convenient to change the third coordinate to $t = z - \frac{1}{2}xy$. With this change of coordinates, the product becomes
\[
(x,y,t)  \cdot (x', y', t') = \left(x + x', y + y', t + t' + \frac{1}{2}(xy'-yx')\right).
\]
It is known (see, e.g., \cite{Scott}), that $\mathrm{Isom}(\mathrm{Nil})$ is isomorphic to the semi-direct product $\mathrm{Nil} \rtimes \mathrm{O}(2)$, where, given a matrix $A \in \mathrm{O}(2)$, it acts on $\mathrm{Nil}$ by sending $(x,y,t)$ to $(A(x,y)^{\mathrm{T}}, (\det A)\cdot t)$ (that is, a rotation acts on the horizontal plane and leaves $t$ unchanged, whereas a reflection acts on the horizontal plane and reverses $t$). We shall construct a linear embedding of $\mathrm{Isom}(\mathrm{Nil})$ by first constructing an embedding of $\mathrm{Nil}$. \\

Let $\varepsilon$ be an indeterminate, and consider the ring $\C[\varepsilon]/(\varepsilon^3)$. Set
\[
P = \left( \begin{array}{cc}
0 & 1 \\
1 & 0
\end{array} \right)
,\quad
Q = \left( \begin{array}{cc}
0 & i \\
-i & 0
\end{array} \right)
,\quad
R = \left( \begin{array}{cc}
-2i & 0 \\
0 & 2i
\end{array} \right).
\]
Here, the matrix entries lie in $\C[\varepsilon]/(\varepsilon^3)$. Note that
\begin{equation}
\label{eqn:nil_proof_P_Q_R_matrices}
P^2 = Q^2 = I
,\quad
PQ + QP = 0
,\quad
PQ - QP = R.
\end{equation}
For each $(x,y,t) \in \mathrm{Nil}$, set
\[
\mathrm{N}(x,y,t) := \varepsilon(xP + yQ) + \varepsilon^2tR.
\]
Note that, for each $(x,y,t)$, $\mathrm{N}(x,y,t)$ is divisible by $\varepsilon$, and so $\mathrm{N}(x,y,t)^3 = 0$. As such, for all $(x,y,t)$, the exponential of $N(x,y,t)$ is well-defined. Define
\[
\Phi(x,y,t) := \exp\mathrm{N}(x,y,t).
\]
With the help of (\ref{eqn:nil_proof_P_Q_R_matrices}), one can check that $\Phi(x,y,t) = I + \varepsilon(xP+yQ) + \varepsilon^2(tR+\frac{1}{2}(x^2+y^2)I)$. More concretely, we find that
\begin{equation}
\label{eqn:nil_proof_Phi_x_y_t_explicit}
\Phi(x,y,t) =
\left(
\begin{array}{cc}
1 + \left(\frac{x^2+y^2}{2}-2it\right)\varepsilon^2 & (x+iy)\varepsilon \\
(x-iy)\varepsilon & 1 + \left(\frac{x^2+y^2}{2}+2it\right)\varepsilon^2
\end{array}
\right).
\end{equation}
We claim that $\Phi$ is a group embedding of $\mathrm{Nil}$ into $\mathrm{SL}_2(\C[\varepsilon]/(\varepsilon^3))$. To see this, first note that, because each $\mathrm{N}(x,y,t)$ is traceless, each $\Phi(x,y,t)$ has unit determinant. Next, in order to show that $\Phi$ respects the multiplication of $\mathrm{Nil}$, we first compute $\Phi((x,y,t) \cdot (x',y',t')) = \Phi\left(x + x', y + y', t + t' + \frac{1}{2}(xy'-yx')\right)$ and find that it amounts to
\[
I + \varepsilon((x+x')P+(y+y')Q) + \varepsilon^2\left((t+t')R + \frac{1}{2}(xy'-yx')R + \frac{1}{2}((x+x')^2+(y+y')^2)I\right).
\]
On the other hand, we must compute $\Phi(x,y,t)\Phi(x',y',t')$. To do so, we must evaluate
\[
\left(I + \varepsilon(xP+yQ) + \varepsilon^2\left(tR+\frac{1}{2}(x^2+y^2)I\right)\right) \cdot \left(I + \varepsilon(x'P+y'Q) + \varepsilon^2\left(t'R+\frac{1}{2}((x')^2+(y')^2)I\right)\right).
\]
Upon expansion, we see that the constant term is $I$ and that the coefficient of $\varepsilon$ is $(x+x')P + (y+y')Q$, as desired. As for the coefficient of $\varepsilon^2$, we find that it amounts to
\[
(t+t')R + \frac{1}{2}(x^2+y^2)I + \frac{1}{2}((x')^2 + (y')^2)I + (xx'+yy')I + xy'PQ + yx'QP.
\]
It remains to demonstrate that $xy'PQ + yx'QP = \frac{1}{2}(xy'-yx')R$. Using (\ref{eqn:nil_proof_P_Q_R_matrices}), this holds as follows:
\begin{align*}
xy'PQ + yx'QP &= \frac{1}{2}(xy'PQ + yx'QP) + \frac{1}{2}(xy'PQ + yx'QP) \\
&= \frac{1}{2}(xy'PQ - xy'QP + xy'QP + yx'QP) + \frac{1}{2}(xy'PQ + yx'PQ - yx'PQ + yx'QP) \\
&= \frac{1}{2}(xy'R + xy'QP + yx'QP) + \frac{1}{2}(xy'PQ + yx'PQ - yx'R) \\
&= \frac{1}{2}(xy'-yx')R.
\end{align*}
Now, we have demonstrated that $\Phi$ is a group homomorphism. It remains to demonstrate that it is faithful. To see this, inspect the explicit image in (\ref{eqn:nil_proof_Phi_x_y_t_explicit}). If $\Phi(x,y,t) = I$, based on the off-diagonal entries, we have that $x = y = 0$. From this, using the diagonal entries, it follows that $1 \pm 2it\varepsilon^2 = 0$, from which it follows that $t = 0$. Thus, $\Phi$ is faithful. \\

In fact, if we projectivize, that is to say postcompose with the projection $\mathrm{SL}_2(\C[\varepsilon]/(\varepsilon^3)) \to \mathrm{PSL}_2(\C[\varepsilon]/(\varepsilon^3))$ to form a representation
\[
\widehat\Phi \colon \mathrm{Nil} \longrightarrow \mathrm{PSL}_2(\C[\varepsilon]/(\varepsilon^3))
\]
then it too is faithful. To see this, note that if $\Phi(x,y,t)$ is a scalar, then from the off-diagonal entries, we once again have $x = y = 0$, and then equality of the diagonal entries enforces $4it\varepsilon^2 = 0$ which forces $t = 0$. \\

We have now constructed a projective linear embedding of $\mathrm{Nil}$. Our goal is to construct such an embedding for $\mathrm{Isom}(\mathrm{Nil}) \cong \mathrm{Nil} \rtimes \mathrm{O}(2)$. To do so, we next construct a projective linear embedding for $\mathrm{O}(2)$. Any element of $\mathrm{O}(2)$ can be written as $R_\vartheta S^{\delta}$ where
\[
R_\vartheta =
\left(
\begin{array}{cc}
\cos\vartheta & -\sin\vartheta \\
\sin\vartheta & \cos\vartheta
\end{array}
\right)
,\quad
S =
\left(
\begin{array}{cc}
1 & 0 \\
0 & -1
\end{array}
\right)
\]
and where $\varepsilon$ is determined up to multiples of $2\pi$ and $\delta \in \{0,1\}$. Define matrices
\[
D_\vartheta =
\left(
\begin{array}{cc}
e^{i\vartheta/2} & 0 \\
0 & e^{-i\vartheta/2}
\end{array}
\right)
,\quad
J =
\left(
\begin{array}{cc}
0 & i \\
i & 0
\end{array}
\right).
\]
Now define a map
\[
\Theta \colon \mathrm{O}(2) \longrightarrow \mathrm{PSL}_2(\C[\varepsilon]/(\varepsilon^3))
\]
by setting $\Theta(R_\vartheta) = [D_\vartheta]$ and $\Theta(S) = [J]$. Note that a shift of $\vartheta$ by a multiple of $2\pi$ leads to a negation of $D_\vartheta$, which, in the projective group, leads to no change. Moreover, $J$ squares to $-I$, which amounts to $I$ in the projective group. The remaining verifications that $\Theta$ yields a group embedding are straightforward. \\

Finally, define a representation
\begin{equation}
\label{eqn:Pi_rep_of_isom_Nil}
\Pi \colon \mathrm{Nil} \rtimes \mathrm{O}(2) \longrightarrow \mathrm{PSL}_2(\C[\varepsilon]/(\varepsilon^3))
\end{equation}
by setting
\[
\Pi(n,A) := \widehat\Phi(n)\Theta(A).
\]
By the universal property of semidirect products, we know that this is a well-defined group map so long as, for any $n \in \mathrm{Nil}$ and $A \in \mathrm{O}(2)$, we have that
\[
\Theta(A)\widehat\Phi(n)\Theta(A)^{-1} = \widehat\Phi(A \cdot n).
\]
This follows from the following readily verifiable identities:
\[
\begin{array}{ll}
D_\vartheta P D_\vartheta^{-1} = \cos\vartheta P + \sin\vartheta\, Q & \quad J P J^{-1} = P \\
D_\vartheta Q D_\vartheta^{-1} = -\sin\vartheta P + \cos\vartheta\, Q & \quad J Q J^{-1} = -Q \\
D_\vartheta R D_\vartheta^{-1} = R & \quad J R J^{-1} = -R.
\end{array}
\]
Finally, we must check that $\Pi$ is faithful. Suppose that $\Pi(n, A) = 1$. If we reduce the matrices modulo $\varepsilon$, $\widehat\Phi(n)$ maps to $1$, whereas $\Theta(A)$ remains unchanged. As $\Theta$ is an embedding, we have that $A = I$. We are then left with $\widehat\Phi(n) = 1$, and so, because $\widehat\Phi$ is faithful, we have $n = 1$. This demonstrates that $\Pi$ is faithful.

\begin{Theorem}
\label{theorem:nil_proof}
If $M$ is an orientable connected compact $3$-manifold which admits a geometric structure modelled on $\mathrm{Nil}$, then $\pi_1M$ is residually $\mathrm{GL}_2$, $\mathrm{SL}_2$, $\mathrm{PGL}_2$, $\mathrm{PSL}_2$-finite.
\end{Theorem}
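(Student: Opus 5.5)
The plan is to mirror the proof of the Euclidean case, using the representation $\Pi$ from (\ref{eqn:Pi_rep_of_isom_Nil}) that was just constructed.

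First, the geometric structure on $M$ modelled on $\mathrm{Nil}$ yields a faithful holonomy representation $\mathrm{hol} \colon \pi_1M \hookrightarrow \mathrm{Isom}(\mathrm{Nil})$. Since $M$ is orientable, we could even land in $\mathrm{Isom}^+(\mathrm{Nil})$, but this is unnecessary, because $\Pi$ is defined on the whole of $\mathrm{Isom}(\mathrm{Nil}) \cong \mathrm{Nil} \rtimes \mathrm{O}(2)$. Composing with $\Pi$ gives a homomorphism
\[
\Pi \circ \mathrm{hol} \colon \pi_1M \longrightarrow \mathrm{PSL}_2(\C[\varepsilon]/(\varepsilon^3)).
\]
It is faithful, being a composite of two injective maps, since faithfulness of $\Pi$ was established above.

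Second, $\pi_1M$ is finitely generated because $M$ is compact. Proposition~\ref{prop:equivalent_characterizations} therefore applies with $K = \mathrm{PSL}_2$ and $R = \C[\varepsilon]/(\varepsilon^3)$, and shows that $\pi_1M$ is residually $\mathrm{PSL}_2$-finite.

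Third, the remaining properties follow from Proposition~\ref{prop:residual_K_finiteness_for_various_K}. The unconditional implications there take residual $\mathrm{PSL}_2$-finiteness to residual $\mathrm{PGL}_2$-finiteness and to residual $\mathrm{SL}_2$-finiteness, and from $\mathrm{SL}_2$ to $\mathrm{GL}_2$. None of the hypotheses on $Z(G)$ are needed, since all arrows used point away from $\mathrm{PSL}_2$.

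The only point I expect to require care is the identification $\mathrm{Isom}(\mathrm{Nil}) \cong \mathrm{Nil} \rtimes \mathrm{O}(2)$ and the fact that $\Theta$ is genuinely a well-defined embedding of $\mathrm{O}(2)$. The relation $SR_\vartheta S^{-1} = R_{-\vartheta}$ must be matched by $JD_\vartheta J^{-1} = D_{-\vartheta}$, and injectivity requires that $[D_\vartheta]$ and $[D_\vartheta J]$ are nontrivial and distinct in $\mathrm{PSL}_2$ unless $\vartheta \in 2\pi\Z$. These were asserted above as straightforward, so the theorem itself reduces to the formal argument just described.
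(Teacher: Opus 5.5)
Your proposal is correct and matches the paper's proof essentially step for step. The paper embeds $\pi_1M$ via holonomy into $\mathrm{Isom}(\mathrm{Nil}) \cong \mathrm{Nil} \rtimes \mathrm{O}(2)$, composes with the faithful representation $\Pi$ into $\mathrm{PSL}_2(\C[\varepsilon]/(\varepsilon^3))$, applies Proposition~\ref{prop:equivalent_characterizations} to get residual $\mathrm{PSL}_2$-finiteness, and obtains the other three properties from the unconditional implications of Proposition~\ref{prop:residual_K_finiteness_for_various_K}.
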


\begin{proof}
Given any such manifold $M$, $\pi_1M$ embeds into $\mathrm{Isom}(\mathrm{Nil})$. As in (\ref{eqn:Pi_rep_of_isom_Nil}) above, this isometry group embeds into $\mathrm{PSL}_2(\C[\varepsilon]/(\varepsilon^3))$, and so, by Proposition~\ref{prop:equivalent_characterizations}, we have that $\pi_1M$ is residually $\mathrm{PSL}_2$-finite. The remaining residual finiteness properties now follow from Proposition~\ref{prop:residual_K_finiteness_for_various_K}.
\end{proof}

\begin{Remark}
The above result is false if we replace commutative rings with fields, or equivalently with integral domains. To see this, note that the Heisenberg manifold $\mathrm{H}_3(\R)/\mathrm{H}_3(\Z)$, which admits $\mathrm{Nil}$ geometry, has fundamental group $\mathrm{H}_3(\Z)$ and, by Proposition 3.10 in \cite{FriedlGillTillmann}, combined with Proposition \ref{prop:residual_K_finiteness_for_various_K} above, we see that $\mathrm{H}_3(\Z)$ does not satisfy the residual finiteness property in the theorem above.
\end{Remark}

\section{The Geometry $\mathrm{Sol}$}

The fundamental groups of orientable connected compact $3$-manifolds which admit a geometric structure modelled on $\mathrm{Sol}$ are subgroups of the isometry group $\mathrm{Isom}^+(\mathrm{Sol})$. Here, $\mathrm{Sol}$ is the $3$-dimensional Lie group $\R^2 \rtimes \R$ where (see \cite[p. 470]{Scott}) the product is given by
\[
(x,y,t)  \cdot (x', y', t') = (x + e^{-t}x', y + e^ty', t + t').
\]
It is known (see, e.g., \cite{Scott}), that $\mathrm{Isom}(\mathrm{Sol})$ is isomorphic to the semi-direct product $\mathrm{Sol} \rtimes D_4$ where $D_4$, an isomorphic copy of the dihedral group of order eight, consists of the maps given by $(x,y,t) \mapsto (\pm x, \pm y, t)$ and $(x,y,t) \mapsto (\pm y, \pm x, -t)$. Of the eight maps in $D_4$, the orientation-preserving maps are
\begin{equation}
\label{equation:sol_proof_kappa_sigma}
1,\quad \kappa \colon (x,y,t) \mapsto (-x, -y, t),\quad \sigma \colon (x,y,t) \mapsto (y, x, -t),\quad \kappa\sigma \colon (x,y,t) \mapsto (-y,-x,-t).
\end{equation}
If we let $D_4^+$ denote the subgroup of $D_4$ generated by $\kappa$ and $\sigma$, it follows that $\mathrm{Isom}^+(\mathrm{Sol})$ is isomorphic to the semi-direct product $\mathrm{Sol} \rtimes D_4^+$. Note that $D_4^+ \cong (\Z / 2)^2$. We shall construct a linear embedding of $\mathrm{Isom}^+(\mathrm{Sol})$ by first constructing an embedding of $\mathrm{Sol}$. \\

Let $\varepsilon$ be an indeterminate, and consider the ring $\C[\varepsilon]/(\varepsilon^2)$. Set
\[
E = \left( \begin{array}{cc}
0 & 1 \\
0 & 0
\end{array} \right)
,\quad
F = \left( \begin{array}{cc}
0 & 0 \\
1 & 0
\end{array} \right).
\]
Here, the matrix entries lie in $\C[\varepsilon]/(\varepsilon^2)$. For $x, y \in \R$, define
\[
\mathrm{U}(x,y) := I + \varepsilon(xE + yF) =
\left(
\begin{array}{cc}
1 & x\varepsilon \\
y\varepsilon & 1
\end{array}
\right).
\]
Using $\varepsilon^2 = 0$, a straightforward check shows that $\det \mathrm{U}(x,y) = 1$ and that
\begin{equation}
\label{equation:sol_proof_U_additive}
\mathrm{U}(x, y)U(x', y') = \mathrm{U}(x + x', y + y').
\end{equation}
Next, for $t \in \R$, define
\[
\mathrm{A}(t) =
\left(
\begin{array}{cc}
e^{-t/2} & 0 \\
0 & e^{t/2}
\end{array}
\right).
\]
Again, the matrix entries lie in $\C[\varepsilon]/(\varepsilon^2)$. Note that $\det \mathrm{A}(t) = 1$, and that
\begin{equation}
\label{equation:sol_proof_A_additive}
\mathrm{A}(t)\mathrm{A}(t') = \mathrm{A}(t+t').
\end{equation}
Note also that
\begin{equation}
\label{equation:sol_proof_A_U_conjugation}
\mathrm{A}(t)\mathrm{U}(x,y)\mathrm{A}(t)^{-1} = \mathrm{U}(e^{-t}x, e^ty).
\end{equation}
Now define
\[
\Psi \colon \mathrm{Sol} \to \mathrm{SL}_2(\C[\varepsilon]/(\varepsilon^2))
\]
by
\[
\Psi(x,y,t) = \mathrm{U}(x,y)\mathrm{A}(t).
\]
Using (\ref{equation:sol_proof_U_additive}), (\ref{equation:sol_proof_A_additive}) and (\ref{equation:sol_proof_A_U_conjugation}), we can see that $\Psi$ is a homomorphism as follows:
\begin{align*}
\Psi(x,y,t)\Psi(x',y',t') &= \mathrm{U}(x,y)\mathrm{A}(t)\mathrm{U}(x',y')\mathrm{A}(t') \\
&= \mathrm{U}(x,y)\mathrm{A}(t)\mathrm{U}(x',y')\mathrm{A}(t)^{-1}\mathrm{A}(t)\mathrm{A}(t') \\
&= \mathrm{U}(x,y)\mathrm{U}(e^{-t}x', e^ty')\mathrm{A}(t+t') \\
&= \mathrm{U}(x+e^{-t}x', y+e^ty')\mathrm{A}(t+t') \\
&= \Psi((x,y,t) \cdot (x',y',t')).
\end{align*}
Moreover, writing out $\Psi(x,y,t)$ explicitly, we find that
\[
\Psi(x,y,t) =
\left(
\begin{array}{cc}
e^{-t/2} & xe^{t/2}\varepsilon \\
ye^{-t/2}\varepsilon & e^{t/2}
\end{array}
\right).
\]
From this, it follows immediately that $\Psi$ is faithful. In fact, if we projectivize, that is to say postcompose with the projection $\mathrm{SL}_2(\C[\varepsilon]/(\varepsilon^2)) \to \mathrm{PSL}_2(\C[\varepsilon]/(\varepsilon^2))$ to form a representation
\[
\widehat\Psi \colon \mathrm{Sol} \longrightarrow \mathrm{PSL}_2(\C[\varepsilon]/(\varepsilon^2))
\]
then it too is faithful. To see this, note that if $\Psi(x,y,t)$ is a scalar, then from the off-diagonal entries, we have $x = y = 0$, and then equality of the diagonal entries enforces $e^t = 1$ which forces $t = 0$. \\

We have now constructed a projective linear embedding of $\mathrm{Sol}$. Our goal is to construct such an embedding for $\mathrm{Isom}^+(\mathrm{Sol}) \cong \mathrm{Sol} \rtimes D_4^+$. To do so, we next construct a projective linear embedding for $D_4^+$. This group is generated by the isometries $\kappa$, $\sigma$ in (\ref{equation:sol_proof_kappa_sigma}). Set
\[
K =
\left(
\begin{array}{cc}
i & 0 \\
0 & -i
\end{array}
\right)
,\quad
S =
\left(
\begin{array}{cc}
0 & i \\
i & 0
\end{array}
\right)
\]
and then define
\[
\Omega \colon D_4^+ \to \mathrm{PSL}_2(\C[\varepsilon]/(\varepsilon^2))
\]
by setting
\[
\kappa \mapsto [K]
,\quad
\sigma \mapsto [S].
\]
A straightforward check shows that $\Omega$ is a faithful representation. \\

Finally, define a representation
\begin{equation}
\label{eqn:Sigma_rep_of_isom_Sol}
\Sigma \colon \mathrm{Sol} \rtimes D_4^+ \longrightarrow \mathrm{PSL}_2(\C[\varepsilon]/(\varepsilon^2))
\end{equation}
by setting
\[
\Sigma(p,\alpha) := \widehat\Psi(p)\Omega(\alpha).
\]
By the universal property of semidirect products, we know that this is a well-defined group map so long as, for any $p \in \mathrm{Sol}$ and $\alpha \in D_4^+$, we have that
\[
\Omega(\alpha)\widehat\Psi(p)\Omega(\alpha)^{-1} = \widehat\Psi(\alpha \cdot p).
\]
This follows from the following readily verifiable identities:
\[
\begin{array}{ll}
K E K^{-1} = -E & \quad S E S^{-1} = F \\
K F K^{-1} = -F & \quad S F S^{-1} = E \\
K A(t) K^{-1} = A(t) & \quad S A(t) S^{-1} = A(-t).
\end{array}
\]
Finally, we must check that $\Sigma$ is faithful. Suppose that $\Sigma(p, \alpha) = 1$. If we reduce the matrices modulo $\varepsilon$, $\widehat\Psi(p)$ maps to $A(t)$, whereas $\Omega(\alpha)$ remains unchanged. A simple check shows that $A(t)\Omega(\alpha) = 1$ forces $\Omega(\alpha) = 1$. As $\Omega$ is an embedding, we have that $\alpha = 1$. We are then left with $\widehat\Psi(p) = 1$, and so, because $\widehat\Psi$ is faithful, we have $p = 1$. This demonstrates that $\Sigma$ is faithful.

\begin{Theorem}
\label{theorem:nil_proof}
If $M$ is an orientable connected compact $3$-manifold which admits a geometric structure modelled on $\mathrm{Sol}$, then $\pi_1M$ is residually $\mathrm{GL}_2$, $\mathrm{SL}_2$, $\mathrm{PGL}_2$, $\mathrm{PSL}_2$-finite.
\end{Theorem}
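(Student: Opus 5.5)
The proof will mirror the Euclidean and $\mathrm{Nil}$ cases. Let $M$ be an orientable connected compact $3$-manifold with a geometric structure modelled on $\mathrm{Sol}$. The holonomy representation gives an embedding $\mathrm{hol}\colon \pi_1M \hookrightarrow \mathrm{Isom}^+(\mathrm{Sol}) \cong \mathrm{Sol} \rtimes D_4^+$. Composing with the faithful representation $\Sigma$ of (\ref{eqn:Sigma_rep_of_isom_Sol}) gives a faithful representation $\Sigma \circ \mathrm{hol}\colon \pi_1M \hookrightarrow \mathrm{PSL}_2(\C[\varepsilon]/(\varepsilon^2))$.

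Since $\pi_1M$ is finitely generated, $M$ being compact, Proposition~\ref{prop:equivalent_characterizations} applies. It turns the existence of a faithful representation into $\mathrm{PSL}_2$ of a commutative ring into residual $\mathrm{PSL}_2$-finiteness of $\pi_1M$. From residual $\mathrm{PSL}_2$-finiteness, Proposition~\ref{prop:residual_K_finiteness_for_various_K} gives the remaining properties, using only the unconditional implications in that diagram:
\begin{itemize}
\item $\mathrm{PSL}_2$-finite implies $\mathrm{PGL}_2$-finite (bottom horizontal arrow);
\item $\mathrm{PSL}_2$-finite implies $\mathrm{SL}_2$-finite (left upward arrow);
\item $\mathrm{SL}_2$-finite implies $\mathrm{GL}_2$-finite (top horizontal arrow).
\end{itemize}

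The only real content is the faithfulness and well-definedness of $\Sigma$, which the preceding discussion supplies. If I were to shore anything up, it would be the step in that discussion that reduces $\Sigma(p,\alpha)=1$ modulo $\varepsilon$. There I would check the following concretely, and projectively:
\begin{itemize}
\item If $A(t)$ times $[I]$, $[K]$, $[S]$ or $[KS]$ is scalar, then $\alpha = 1$ and $t = 0$. For the diagonal cases this follows because $e^{-t/2}i$ and $-e^{t/2}i$ (respectively $e^{\mp t/2}$) must agree, and $K$ alone is not scalar. For the $S$ and $KS$ cases the product has zero diagonal, so it is not scalar.
\item Once $\alpha=1$ and $t=0$, the off-diagonal entries give $x=y=0$.
\end{itemize}
I do not expect any genuine obstacle here beyond this bookkeeping.
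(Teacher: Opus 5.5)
Your proposal is correct and follows the paper's proof: embed $\pi_1M$ into $\mathrm{Isom}^+(\mathrm{Sol}) \cong \mathrm{Sol} \rtimes D_4^+$ via holonomy, compose with the faithful representation $\Sigma$ into $\mathrm{PSL}_2(\C[\varepsilon]/(\varepsilon^2))$, apply the equivalent-characterizations proposition, and then use the unconditional implications among the four residual properties. Your explicit modulo-$\varepsilon$ verification that $A(t)$ times a lift of $\Omega(\alpha)$ is scalar only when $\alpha = 1$ and $t = 0$ is also correct; it spells out the ``simple check'' the paper leaves to the reader.
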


\begin{proof}
Given any such manifold $M$, $\pi_1M$ embeds into $\mathrm{Isom}^+(\mathrm{Sol})$. As in (\ref{eqn:Sigma_rep_of_isom_Sol}) above, this isometry group embeds into $\mathrm{PSL}_2(\C[\varepsilon]/(\varepsilon^2)$, and so, by Proposition~\ref{prop:equivalent_characterizations}, we have that $\pi_1M$ is residually $\mathrm{PSL}_2$-finite. The remaining residual finiteness properties now follow from Proposition~\ref{prop:residual_K_finiteness_for_various_K}.
\end{proof}

\section{The Geometry $\widetilde{\mathrm{SL}_2}$}
\label{section:SL_2_tilde}

The orientable connected compact $3$-manifolds which admit a geometric structure modeled on $\widetilde{\mathrm{SL}_2}$ are known to be the total spaces of certain Seifert bundles. Recall, as per \cite{Seifert, Orlik, Scott}, that a Seifert bundle $M \to S$ is characterized by an invariant of the form
\[
(b,(\varepsilon, g); (a_1,b_1), \dots, (a_n,b_n)).
\]
Here:
\begin{itemize}
\item $b$ is an integer.
\item $\varepsilon = +1$ if $S$ is orientable and $\varepsilon = -1$ if $S$ is non-orientable.
\item $g$ is the genus of $S$; $g \ge 1$ if $\varepsilon = -1$.
\item $n \in \N$ and the $(a_i,b_i)$ are pairs of integers determining the type of each of the $n$ exceptional orbits, where, for each $i \in \{1, \dots, n\}$, $a_i$ and $b_i$ are coprime and $0 < b_i < a_i$.
\end{itemize}

Given such a bundle $M \to S$, with invariant $(b,(\varepsilon, g); (a_1,b_1), \dots, (a_n,b_n))$, recall also that, if $\varepsilon = +1$, $\pi_1M$ admits the  presentation
\begin{equation}
\label{eqn:seifert_bundle_orientable_base_pi1_presentation}
\pi_1M =
\left\langle
\begin{array}{c}
h \\
s_1,\ldots,s_n \\
u_1,v_1,\ldots,u_g,v_g
\end{array}
\left|
\begin{array}{cl}
[h,s_i] = [h,u_j] = [h,v_j] = 1, &1 \le i \le n, \: 1 \le j \le g \\
s_i^{a_i}h^{b_i} = 1, &1 \le i \le n \\
(s_1 \cdots s_n)[u_1,v_1] \cdots [u_g,v_g] = h^b &
\end{array}
\right.
\right\rangle
\end{equation}
and, if $\varepsilon = -1$, $\pi_1M$ admits instead the  presentation
\begin{equation}
\label{eqn:seifert_bundle_nonorientable_base_pi1_presentation}
\pi_1M =
\left\langle
\begin{array}{c}
h \\
s_1,\ldots,s_n \\
v_1,\ldots,v_g
\end{array}
\left|
\begin{array}{cl}
[h,s_i] = 1, \quad v_jhv_j^{-1} = h^{-1}, &1 \le i \le n, \: 1 \le j \le g \\
s_i^{a_i}h^{b_i} = 1, &1 \le i \le n \\
(s_1 \cdots s_n)(v_1^2 \cdots v_g^2) = h^b &
\end{array}
\right.
\right\rangle.
\end{equation}

Now, as per, e.g., \cite[Corollary 12.6.6]{Martelli}, in the case of $3$-manifolds $M$ which admit a geometric structure modelled on $\widetilde{\mathrm{SL}_2}$, the Seifert bundles which arise are exactly those where the following conditions hold:
\begin{itemize}
\item The Euler orbifold characteristic $\chi(M) := \chi(S) - \sum\left(1-\frac{1}{a_i}\right)$ is negative. Here $\chi(S)$ is $2-2g$ if $\varepsilon = +1$, and $2-g$ if $\varepsilon = -1$.
\item The circle bundle Euler number $e(M) := b + \sum\frac{b_i}{a_i}$ is non-zero. Note that then $e(M)$ is strictly positive for one of the two possible orientations -- let us always choose this orientation.
\end{itemize}

Now, before proceeding further, we need a technical lemma.

\begin{Lemma}
\label{lem:SL2_tilde_proof_ideal_lemma}
Let $p^{\pm}$ and $x$ be indeterminates and set $R$ to be the ring $\C[p^{\pm}, x]$. Fix positive integers $A$, $B$, and set $I$ to be the ideal generated by $(p^A-p^{-A})x$, $x^3$ and $1+x^2-p^B$. For each non-zero $n$, $1 - p^n \notin I$.
\end{Lemma}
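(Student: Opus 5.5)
The plan is to exhibit a ring homomorphism $\phi \colon R \to \C[\varepsilon]/(\varepsilon^3)$ that kills the three generators of $I$ but sends $1-p^n$ to a non-zero element for each non-zero $n$. Such a $\phi$ factors through $R/I$, so $1-p^n \notin I$ follows at once. The ring $\C[\varepsilon]/(\varepsilon^3)$ is the natural target: it is the same ring used for the $\mathrm{Nil}$ geometry, and $x$ must be nilpotent of order three while $p$ must differ from $1$ only at second order.

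The homomorphism will be defined by
\[
x \mapsto \varepsilon, \qquad p \mapsto 1 + \tfrac{1}{B}\varepsilon^2 .
\]
Since $\varepsilon^4 = 0$, the element $1 + \frac{1}{B}\varepsilon^2$ is a unit with inverse $1 - \frac{1}{B}\varepsilon^2$. Hence the assignment extends uniquely to a $\C$-algebra map on the Laurent-polynomial ring $\C[p^{\pm}, x]$, by the universal property of localization and of polynomial rings. For every integer $m$, positive or negative, the binomial expansion truncated by $\varepsilon^3=0$ gives
\[
\phi(p)^m = 1 + \tfrac{m}{B}\varepsilon^2 .
\]

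Next I check the three generators of $I$:
\[
\phi(x^3) = \varepsilon^3 = 0, \qquad \phi(1 + x^2 - p^B) = 1 + \varepsilon^2 - \left(1 + \varepsilon^2\right) = 0,
\]
\[
\phi\big((p^A - p^{-A})x\big) = \tfrac{2A}{B}\varepsilon^2 \cdot \varepsilon = 0 .
\]
Thus $I \subseteq \ker\phi$. On the other hand,
\[
\phi(1 - p^n) = -\tfrac{n}{B}\varepsilon^2 ,
\]
which is non-zero in $\C[\varepsilon]/(\varepsilon^3)$ whenever $n \neq 0$, because $n/B$ is a non-zero complex number. Therefore $1 - p^n \notin I$.

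There is no real obstacle. The only step needing care is choosing the image of $p$ so that $p^B = 1 + x^2$ holds exactly. This forces the first-order part of $p - 1$ to vanish, which in turn makes the relation involving $A$ automatic, since $(p^A - p^{-A})$ is then divisible by $\varepsilon^2$ and is killed by multiplication by $x$. Everything else is a routine truncated-binomial computation.
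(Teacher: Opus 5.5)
Your proof is correct, and it takes a different route from the paper's; the two are essentially dual.

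The paper argues by contradiction from a putative certificate $F_1(p^A-p^{-A})x+F_2x^3+F_3(1+x^2-p^B)=1-p^n$. Setting $x=0$ gives $G_0(p)(1-p^B)=1-p^n$ with $G_0=F_3(p,0)$. Setting $p=1$ and cancelling $x^2$ in the domain $\C[x]$ gives $G_0(1)=0$. The double root of $G_0(p)(1-p^B)$ at $p=1$ then clashes with the simple root of $1-p^n$.

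You instead exhibit a witness: the $\C$-algebra map $\phi$ to $\C[\varepsilon]/(\varepsilon^3)$ with $x\mapsto\varepsilon$ and $p\mapsto 1+\frac{1}{B}\varepsilon^2$. All your checks are right: invertibility of $\phi(p)$, $\phi(p)^m=1+\frac{m}{B}\varepsilon^2$ for all $m\in\Z$, vanishing on the three generators, and $\phi(1-p^n)=-\frac{n}{B}\varepsilon^2\neq 0$.

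The two arguments detect the same quantity. Differentiating the paper's identity $G_0(p)(1-p^B)=1-p^n$ at $p=1$ gives $B\,G_0(1)=n$, so its contradiction amounts to $n/B=0$, which is exactly the coefficient your $\phi$ reads off.

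What each buys:
\begin{itemize}
\item The paper's elimination argument is self-contained and needs no auxiliary ring to be guessed.
\item Yours is shorter, handles negative $n$ without a reduction, and avoids the Laurent-polynomial bookkeeping.
\item Yours also pays off in the application. Composing the representation from the subsequent theorem (where $B=Ae$) with $\phi$ gives one explicit representation of $\pi_1M$ into $\mathrm{SL}(2,\C[\varepsilon]/(\varepsilon^3))$, or $\mathrm{GL}$ when the base is non-orientable. This is the same ring already used for $\mathrm{Nil}$. Under it, $h^k\mapsto\mathrm{diag}(1+\frac{k}{e}\varepsilon^2,\,1-\frac{k}{e}\varepsilon^2)$, which is visibly non-scalar for every $k\neq 0$, so the projective conclusions at powers of $h$ are immediate as well.
\end{itemize}
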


\begin{proof}
It suffices to demonstrate the conclusion for positive integers $n$. Suppose, by way of contradiction, that there exist polynomials $F_1(p,x)$, $F_2(p,x)$, $F_3(p,x)$ such that
\begin{equation}
\label{eqn:SL2_tilde_proof_ideal_lemma}
F_1(p,x)(p^A-p^{-A})x + F_2(p,x)x^3 + F_3(p,x)(1+x^2-p^B) = 1-p^n.
\end{equation}
Let $F_3(p,x) = G_0(p) + G_1(p)x + \cdots + G_d(p)x^d$, for some degree $d$ and polynomials $G_i$ in only the indeterminate $p$. Upon substituting $x = 0$ into (\ref{eqn:SL2_tilde_proof_ideal_lemma}), we find that
\begin{equation}
\label{eqn:SL2_tilde_proof_ideal_lemma_2}
G_0(p)(1-p^B) = 1 - p^n.
\end{equation}
Note that this forces $G_0(p)$ to contain only non-negative powers of $p$. Next, upon substitution of $p = 1$ into (\ref{eqn:SL2_tilde_proof_ideal_lemma}), we find that $F_2(1,x)x^3 + F_3(1,x)x^2 = 0$, and so $F_2(1,x)x + F_3(1,x) = 0$. Upon substituting $x=0$, we find that $G_0(1) = 0$. Combining this with (\ref{eqn:SL2_tilde_proof_ideal_lemma_2}), we have a contradiction: $G_0(p)(1-p^B)$ then has a root at $p = 1$ with multiplicity at least $2$, whereas $1 - p^n$ has only a simple root at $p = 1$. This completes the proof.
\end{proof}

\begin{Theorem}
If $M$ is an orientable connected compact $3$-manifold which admits a geometric structure modelled on $\widetilde{\mathrm{SL}_2}$, then $M$ is the total space of a Seifert bundle $M \to S$. If $S$ is orientable and has genus $g \ge 1$, or if $S$ is non-orientable and has genus $g \ge 3$, we have that $\pi_1M$ is residually $\mathrm{GL}_2$, $\mathrm{SL}_2$, $\mathrm{PGL}_2$, $\mathrm{PSL}_2$-finite.
\end{Theorem}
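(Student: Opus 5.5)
The plan is to split the problem into the base orbifold and the fibre, and to detect each with its own representation. By Proposition~\ref{prop:equivalent_characterizations} it suffices to produce a faithful representation into $\mathrm{SL}(2,R')$ for some commutative ring $R'$. The other three properties then follow from Proposition~\ref{prop:residual_K_finiteness_for_various_K} once we check the centre hypotheses, or alternatively by arranging the construction to land in $\mathrm{PSL}_2$ directly.

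\textbf{Step 1: the base orbifold.} Let $\langle h\rangle$ be the fibre subgroup, which is normal. Since $\chi(M)<0$, the quotient $\pi_1M/\langle h\rangle$ is the orbifold fundamental group of a hyperbolic $2$-orbifold. It therefore embeds as a Fuchsian group in $\mathrm{Isom}(\Hyp^2)\cong\mathrm{PGL}(2,\R)$. This gives a representation $\rho_1$ whose kernel is exactly $\langle h\rangle$.

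\textbf{Step 2: the fibre.} The main work is a second representation $\rho_2$ that is injective on $\langle h\rangle$. If both are available, we combine them into $\rho_1\times\rho_2$ into $K(R_1\times R_2)$, as in the proof of Theorem~\ref{thm:hyperbolic_and_trivial_product_geometries}. Suppose $\gamma$ lies in the kernel of the product. Then $\gamma\in\ker\rho_1$, so $\gamma=h^n$, and $\rho_2(h^n)=1$ forces $n=0$.

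To build $\rho_2$, I would work over $\C[p^{\pm},x]/I$, with $I$ and the integers $A,B$ as in Lemma~\ref{lem:SL2_tilde_proof_ideal_lemma}.
\begin{itemize}
\item Send $h$ to a diagonal matrix built from $p$, for instance $\mathrm{diag}(p^{A},p^{-A})$ or a scalar power of $p$, so that $\rho_2(h^n)=1$ would force $1-p^{m}\in I$ for some $m\neq 0$, which is impossible by the Lemma.
\item Send each $s_i$ to a diagonal power of $p$ chosen so that $s_i^{a_i}h^{b_i}$ maps to $1$. To make these roots exist, first replace $h$ by a suitable power of $p$, say with exponent $\mathrm{lcm}(a_i)$.
\item Choose the remaining generators so that the long relation, $(s_1\cdots s_n)[u_1,v_1]\cdots=h^b$ or its non-orientable analogue, holds. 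Here $(s_1\cdots s_n)^{-1}h^b$ is a fixed diagonal power of $p$, and the surface generators must produce it.
\end{itemize}

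\textbf{Step 3: realising the surface relation (the main obstacle).} The genus hypothesis is used exactly here. In the orientable case with $g\ge 1$, I would take $u_1$ and $v_1$ to be a diagonal matrix in $p$ and a matrix of the form $I+xE+\cdots$. A short computation modulo $x^3$ should show that $[u_1,v_1]$ is a diagonal matrix with entries like $1+x^2$ or $p^{\pm}$-corrections. The nilpotent off-diagonal terms that appear are killed precisely by the relation $(p^A-p^{-A})x=0$. The relation $1+x^2=p^B$ then lets this commutator equal the required power of $p$. All other $u_j,v_j$ are sent to $I$, and everything commutes with the diagonal image of $h$ because the off-diagonal parts are killed by $(p^A-p^{-A})x$.

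In the non-orientable case, each $v_j$ must invert $h$. So $v_j$ should act like the swap matrix $\begin{pmatrix}0&1\\-1&0\end{pmatrix}$, possibly times a nilpotent perturbation, which conjugates $\mathrm{diag}(p^A,p^{-A})$ to its inverse. Then $v_j^2$ is $\pm I$ plus correction terms. The hypothesis $g\ge3$ should give enough freedom: two of the $v_j$ combine to mimic a commutator, since $v_1^2v_2^2$ behaves like a handle, and the third absorbs signs.

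I expect the hardest point to be making the relations hold exactly while $\rho_2(h)$ still sees $p$. The right ideal $I$ is dictated by these computations, and faithfulness on $\langle h\rangle$ is then exactly the content of Lemma~\ref{lem:SL2_tilde_proof_ideal_lemma}.
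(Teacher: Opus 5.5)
There is a genuine gap in Step~3, which is where all the content of the proof lies. The rest of your framework is the paper's. The base-orbifold holonomy in $\mathrm{PGL}(2,\R)$ handles everything outside $\langle h\rangle$. A representation over $\C[p^{\pm},x]/I$ with $h\mapsto\mathrm{diag}(p^{A},p^{-A})$ and $s_i\mapsto\mathrm{diag}(p^{-b_iA/a_i},p^{b_iA/a_i})$, together with Lemma~\ref{lem:SL2_tilde_proof_ideal_lemma} (with $B=Ae$), handles the powers of $h$.

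In the orientable case you pair a diagonal $u_1$ with $v_1=I+xE+\cdots$, and this cannot work. Conjugation by a diagonal matrix fixes diagonal entries. So $[u_1,v_1]=\mathrm{diag}(\lambda,\lambda^{-1})$, i.e.\ $u_1v_1u_1^{-1}=\mathrm{diag}(\lambda,\lambda^{-1})\,v_1$, forces $(\lambda-1)(v_1)_{11}=0$. Since $(v_1)_{11}$ is $1$ plus a nilpotent, hence a unit, this gives $\lambda=1$. Concretely, $[\mathrm{diag}(q,q^{-1}),I+xE]=I+(q^2-1)xE$ is unipotent, so it can only equal a diagonal matrix if that matrix is $I$. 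Matching it with $(s_1\cdots s_n)^{-1}h^b\mapsto\mathrm{diag}(p^{Ae},p^{-Ae})$ then forces $p^{Ae}=1$, so $\rho_2(h)$ has finite order and Step~2 collapses.

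What you need is two \emph{opposite} unipotents, $u_1\mapsto I+xE$ and $v_1\mapsto I+xF$. Both commute with $\mathrm{diag}(p^A,p^{-A})$ once $(p^A-p^{-A})x=0$, and
\[
[I+xE,\,I+xF]=\begin{pmatrix}1+x^2+x^4&-x^3\\ x^3&1-x^2\end{pmatrix}.
\]
Its off-diagonal terms die by $x^3=0$, not by $(p^A-p^{-A})x=0$. It equals $\mathrm{diag}(p^{Ae},p^{-Ae})$ once $1+x^2=p^{Ae}$, because $(1+x^2)^{-1}\equiv 1-x^2$ modulo $x^3$. That is exactly where the ideal of the Lemma comes from.

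In the non-orientable case the mechanism you propose is also off. Two crosscaps do not make a handle: with $w=v_1v_2$ one has $v_1^2v_2^2=v_1wv_1^{-1}w$, a Klein-bottle relator rather than a commutator. Moreover, the unipotent matrices must sit on generators that \emph{centralise} $h$, while every $v_j$ inverts $h$, and you never say which generators to use.

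The missing idea is the Nielsen change of generators $a=v_3^{-1}v_2^{-1}$, $b=v_1v_2$, $c=v_1v_2v_3$, under which $v_1^2v_2^2v_3^2=c[a,b]c$. Here $a,b$ are even words and so commute with $h$, while $c$ inverts it. All three crosscaps are used up to produce one handle plus one crosscap; the third does not merely absorb signs, and this is exactly why $g\ge 3$ is needed. Then set $a\mapsto I+xF$, $b\mapsto I+xE$, and $c$ and $v_j$ ($j\ge4$) $\mapsto\begin{pmatrix}0&1\\1&0\end{pmatrix}$. Conjugation by this swap exchanges $E$ and $F$, so the relation reduces to the orientable computation. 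Using this $\mathrm{GL}_2$ swap, which squares to $I$, instead of your $\mathrm{SL}_2$ one, which squares to $-I$, avoids the sign bookkeeping.

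Finally, to form $\rho_1\times\rho_2$ you must put both factors in the same $K$, e.g.\ by projecting $\rho_2$ to $\mathrm{PGL}_2$. This keeps $\rho_2$ injective on $\langle h\rangle$, because $1-p^{2An}\notin I$ makes $\rho_2(h^n)$ non-scalar for $n\neq 0$.
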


\begin{proof}
As per \cite[Proposition 12.6.2]{Martelli}, we have a short exact sequence
\[
0 \longrightarrow \R \longrightarrow \mathrm{Isom}^+(\widetilde{\mathrm{SL}_2}) \overset{p}{\longrightarrow} \mathrm{Isom}(\Hyp^2) \longrightarrow 0.
\]
By \cite[Proposition 8.5]{Iversen}, we have that $\mathrm{Isom}(\Hyp^2) \cong \mathrm{PGL}(2,\R)$.
Under the holonomy representation, $\pi_1M$ injects into $\mathrm{Isom}^+(\widetilde{\mathrm{SL}_2})$, and, under the presentations given above, the image of this injection intersects the kernel of $p$ in exactly the subgroup generated by the regular fibre $h$. This, combined with Propositions \ref{prop:residual_K_finiteness_for_various_K} and \ref{prop:equivalent_characterizations}, shows that $\pi_1M$ is residually $\mathrm{GL}_2$, $\mathrm{SL}_2$, $\mathrm{PGL}_2$, $\mathrm{PSL}_2$-finite at all elements outside of the subgroup generated by $h$. It remains to consider the powers of $h$. \\

Consider first the case where the base $S$ of the Seifert bundle is orientable. In this case, we have the presentation as in (\ref{eqn:seifert_bundle_orientable_base_pi1_presentation}) above, where $g \ge 1$ and $e = b + \sum \frac{b_i}{a_i} > 0$. Let $p^{\pm}$ and $x$ be indeterminates, set $A := a_1 \cdots a_n$ and consider the following putative presentation of $\pi_1M$:
\[
h \mapsto \left( \begin{array}{cc}
p^A & 0 \\
0 & p^{-A}
\end{array} \right)
,\:
s_i \mapsto \left( \begin{array}{cc}
p^{-\frac{b_i}{a_i}A} & 0 \\
0 & p^{\frac{b_i}{a_i}A}
\end{array} \right)
,\:
u_1 \mapsto \left( \begin{array}{cc}
1 & x \\
0 & 1
\end{array} \right)
,\:
v_1 \mapsto \left( \begin{array}{cc}
1 & 0 \\
x & 1
\end{array} \right).
\]
For $j \ge 2$, $u_j$ and $v_j$ are to be mapped to the identity. By construction, we have satisfied the relations $[h,s_i] = 1$ and $s_i^{a_i}h^{b_i} = 1$ for $1 \le i \le n$, and also $[h,u_j] = [h,v_j] = 1$ for $j \ge 2$. In order to satisfy $[h,u_1] = [h,v_1] = 1$, we need to enforce the relation $(p^A-p^{-A})x = 0$. In order to satisfy the relation $(s_1 \cdots s_n)[u_1,v_1] \cdots [u_g,v_g] = h^b$, or equivalently, $[u_1,v_1] = s_n^{-1} \cdots s_1^{-1}h^b$, we compute both sides to find that
\[
s_n^{-1} \cdots s_1^{-1}h^b = \left( \begin{array}{cc}
p^{Ae} & 0 \\
0 & p^{-Ae}
\end{array} \right)
,\:
[u_1,v_1] = \left( \begin{array}{cc}
1+x^2+x^4 & -x^3 \\
x^3 & 1-x^2
\end{array} \right).
\]
It follows that this relation is satisfied if we enforce $x^3 = 0$ and $1+x^2-p^{Ae} = 0$. As such, we have a well defined representation $\pi_1M \to \mathrm{SL}(2,R)$ where $R := \C[p^{\pm},x]/I$, where $I$ is the ideal generated by the polynomials $(p^A-p^{-A})x$, $x^3$, $1+x^2-p^{Ae}$. By Lemma~\ref{lem:SL2_tilde_proof_ideal_lemma}, we have that, for each non-zero $n$, $1-p^n \notin I$. It follows that all non-trivial powers of $h$ have a non-trivial image, and so we have residual $\mathrm{SL}_2$-finiteness. Moreover, such an image cannot be a scalar matrix, for $p^{An} = p^{-An}$ implies $p^{2An} = 1$. Combining these facts, we find that, at the powers of the central element $h$, $\pi_1M$ is residually $\mathrm{SL}_2$, $\mathrm{PSL}_2$-finite. Finally, at these powers of $h$, residual $\mathrm{GL}_2$, $\mathrm{PGL}_2$-finiteness then also follow from Proposition \ref{prop:residual_K_finiteness_for_various_K}. \\

Now consider the case where the base $S$ of the Seifert bundle is non-orientable. In this case, we have the presentation as in (\ref{eqn:seifert_bundle_nonorientable_base_pi1_presentation}) above, where $g \ge 3$ and $e = b + \sum \frac{b_i}{a_i} > 0$. The above approach appears at first to not work in this case, because of the lack of a commutator in the relation $(s_1 \cdots s_n)(v_1^2 \cdots v_g^2) = h^b$. However, recall that, by Dyck's theorem, a non-orientable surface with 
at least three crosscaps must contain a handle. Algebraically, we can see this with the following transformation. Leave $h$, the $s_i$ for all $i$, and the $v_j$ for $j \ge 4$ as they are. Replace $v_1$, $v_2$ and $v_3$ by:
\[
a = v_3^{-1}v_2^{-1} \qquad b = v_1v_2 \qquad c = v_1v_2v_3.
\]
This is a Nielsen automorphism of $\langle v_1, v_2, v_3 \rangle$; the inverse is given by
\[
v_1 = ca \qquad v_2 = a^{-1}c^{-1}b \qquad v_3 = b^{-1}c.
\]
A straightforward verification shows that, with these new generators, $\pi_1M$ admits the following presentation
\[
\pi_1M =
\left\langle
\begin{array}{c}
h \\
s_1,\ldots,s_n \\
a, b, c, v_4,\ldots,v_g
\end{array}
\left|
\begin{array}{cl}
[h,s_i] = [h,a] = [h,b] = 1, \quad chc^{-1} = v_jhv_j^{-1} = h^{-1}, &1 \le i \le n, \: 4 \le j \le g \\
s_i^{a_i}h^{b_i} = 1, &1 \le i \le n \\
(s_1 \cdots s_n)(c[a,b]c)(v_4^2 \cdots v_g^2) = h^b &
\end{array}
\right.
\right\rangle.
\]
Note that $c[a,b]c$ is a conjugate of $[a,b]c^2$, the standard handle-plus-crosscap relator; geometrically, we've traded three crosscaps for one handle $[a,b]$ and one crosscap $c$. \\

Now we proceed as follows. As before, let $p^{\pm}$ and $x$ be indeterminates, set $A := a_1 \cdots a_n$ and consider the following putative presentation of $\pi_1M$:
\[
h \mapsto \left( \begin{array}{cc}
p^A & 0 \\
0 & p^{-A}
\end{array} \right)
,\:
s_i \mapsto \left( \begin{array}{cc}
p^{-\frac{b_i}{a_i}A} & 0 \\
0 & p^{\frac{b_i}{a_i}A}
\end{array} \right)
,\:
a \mapsto \left( \begin{array}{cc}
1 & 0 \\
x & 1
\end{array} \right)
,\:
b \mapsto \left( \begin{array}{cc}
1 & x \\
0 & 1
\end{array} \right)
,\:
c \mapsto \left( \begin{array}{cc}
0 & 1 \\
1 & 0
\end{array} \right).
\]
(Note that interchange of the matrices for $a$ and $b$, as opposed to $u_1$ and $v_1$ above.) For $j \ge 4$, $v_j$ is sent to the same swap matrix as $c$ above. By construction, we have satisfied the relations $[h,s_i] = 1$ and $s_i^{a_i}h^{b_i} = 1$ for $1 \le i \le n$, and also $chc^{-1} = h^{-1}$ and $v_jhv_j^{-1} = h^{-1}$ for $j \ge 4$. In order to satisfy $[h,a] = [h,b] = 1$, we need to enforce the relation $(p^A-p^{-A})x = 0$. In order to satisfy the relation $(s_1 \cdots s_n)(c[a,b]c)(v_4^2 \cdots v_g^2) = h^b$, or equivalently, $c[a,b]c = s_n^{-1} \cdots s_1^{-1}h^b$, we compute both sides to find that
\[
s_n^{-1} \cdots s_1^{-1}h^b = \left( \begin{array}{cc}
p^{Ae} & 0 \\
0 & p^{-Ae}
\end{array} \right)
,\:
c[a,b]c = \left( \begin{array}{cc}
1+x^2+x^4 & -x^3 \\
x^3 & 1-x^2
\end{array} \right).
\]
It follows that this relation is satisfied if we enforce $x^3 = 0$ and $1+x^2-p^{Ae} = 0$. As such, we have a well defined representation $\pi_1M \to \mathrm{GL}(2,R)$ where $R := \C[p^{\pm},x]/I$, where $I$ is the ideal generated by the polynomials $(p^A-p^{-A})x$, $x^3$, $1+x^2-p^{Ae}$. Note that, in this case, the target group is the general linear group, not the special linear group. By the same argument as in the case where the base $S$ of the Seifert bundle is orientable, we have residual $\mathrm{GL}_2$, $\mathrm{PGL}_2$-finiteness. Residual $\mathrm{SL}_2$, $\mathrm{PSL}_2$-finiteness then follow by Proposition \ref{prop:residual_K_finiteness_for_various_K}.
\end{proof}

The above result demonstrates residual finiteness of some 3-manifolds with $\widetilde{\mathrm{SL}_2}$ geometry. As the following result shows, the result does not hold for all such 3-manifolds.

\begin{Theorem}
\label{thm:SL2_tilde_counterexample}
Let $B$ denote the Brieskorn sphere $\Sigma(2,3,7)$, which is an orientable connected compact 3-manifold, and which admits a geometric structure modelled on $\widetilde{\mathrm{SL}_2}$. The fundamental group $\pi_1B$ is not residually $\mathrm{K}$-finite for any of $\mathrm{K} = \mathrm{GL}_2, \mathrm{SL}_2, \mathrm{PGL}_2, \mathrm{PSL}_2$.
\end{Theorem}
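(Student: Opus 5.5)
The plan is to mimic the proof of Theorem \ref{thm:spherical_counterexample}, using Proposition \ref{prop:perfect_group_counterexample}. First, we record the structure of $\pi_1 B$. Since $B = \Sigma(2,3,7)$ is an integral homology sphere, $\pi_1 B$ is perfect. As a Seifert fibred space over $S^2$ with three exceptional fibres of orders $2,3,7$, its fundamental group has the presentation (\ref{eqn:seifert_bundle_orientable_base_pi1_presentation}) with $g = 0$, $n = 3$. Concretely, it is the central extension
\[
\langle h, s_1, s_2, s_3 \mid h \text{ central},\ s_1^2 h^{b_1} = s_2^3 h^{b_2} = s_3^7 h^{b_3} = 1,\ s_1 s_2 s_3 = h^b \rangle
\]
of the $(2,3,7)$ triangle group $\Delta(2,3,7)$. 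The regular fibre $h$ is central.

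The key fact is that $h$ has infinite order in $\pi_1 B$. This holds because $B$ is aspherical, being an $\widetilde{\mathrm{SL}_2}$-manifold. Equivalently, as noted in the proof of the preceding theorem, the holonomy image of $\langle h \rangle$ is the intersection with the kernel $\R$ of $\mathrm{Isom}^+(\widetilde{\mathrm{SL}_2}) \to \mathrm{Isom}(\Hyp^2)$. This intersection is infinite cyclic, since $\pi_1 B$ acts cocompactly while the triangle group is its image. In particular $h^2 \neq 1$, and $h \neq 1$.

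Now take $z = h \in \mathrm{Z}(\pi_1 B)$. By Proposition \ref{prop:perfect_group_counterexample}(i), every homomorphism $\pi_1 B \to \mathrm{GL}(2,R)$ with $R$ a finite commutative ring kills $h^2$. So $\pi_1 B$ is not residually $\mathrm{GL}_2$-finite at $h^2$. By Proposition \ref{prop:residual_K_finiteness_for_various_K}, residual $\mathrm{SL}_2$-finiteness at $h^2$ would imply residual $\mathrm{GL}_2$-finiteness at $h^2$, so $\pi_1 B$ is not residually $\mathrm{SL}_2$-finite either.

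By Proposition \ref{prop:perfect_group_counterexample}(ii), every homomorphism $\pi_1 B \to \mathrm{PGL}(2,R)$ with $R$ finite kills $h$. So $\pi_1 B$ is not residually $\mathrm{PGL}_2$-finite at $h$. Applying the implication from residual $\mathrm{PSL}_2$-finiteness to residual $\mathrm{PGL}_2$-finiteness in Proposition \ref{prop:residual_K_finiteness_for_various_K} (it holds elementwise) then excludes $\mathrm{PSL}_2$ as well.

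The only step requiring any care is justifying that $h$ has infinite order and that $\pi_1 B$ is perfect. Perfectness follows from $H_1(B) = 0$, which also follows directly from the presentation: abelianizing gives a determinant of $\pm 1$ because $e(B) \cdot 2 \cdot 3 \cdot 7 = \pm 1$. For infinite order, I would cite the standard fact that a Seifert fibred manifold with infinite $\pi_1$ has the regular fibre generating an infinite cyclic normal subgroup, with quotient the orbifold fundamental group; here that quotient is the infinite triangle group $\Delta(2,3,7)$.
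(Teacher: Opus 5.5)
Your proposal is correct and follows essentially the paper's proof. Both use that $\pi_1 B$ is perfect and that the central fibre $h$ has infinite order, apply Proposition \ref{prop:perfect_group_counterexample}, and obtain the remaining cases from Proposition \ref{prop:residual_K_finiteness_for_various_K}. The differences are small: you apply part (ii) directly at $h$ for the projective cases, whereas the paper derives all four cases from the $\mathrm{GL}_2$ failure at $h^2$, and you also justify perfectness and the infinite order of $h$, which the paper simply asserts.
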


\begin{proof}
As $B$ is an integral homology sphere, $\pi_1 B$ is perfect. Moreover, the regular fibre $h$ generates an infinite cyclic centre. By Proposition \ref{prop:perfect_group_counterexample}, every homomorphism $\pi_1B \to \mathrm{GL}(2, R)$, where $R$ is a finite commutative ring $R$, kills $h^2$. Thus $\pi_1B$ is not residually $\mathrm{GL}_2$-finite. The remaining cases now follow by Proposition \ref{prop:residual_K_finiteness_for_various_K}.
\end{proof}

\end{document}